%
%
%
%
%
\RequirePackage{fix-cm}
\documentclass[smallextended]{svjour3}       
\smartqed  
\usepackage{graphicx}
%
%
\usepackage{amssymb}
\usepackage{latexsym}
\usepackage{url}
\usepackage{xcolor}
\definecolor{newcolor}{rgb}{.8,.349,.1}
\usepackage{overpic}
\usepackage{subcaption}
\captionsetup{compatibility=false}
\usepackage{amsmath}
\usepackage{bm}
\usepackage[numbers]{natbib}
\usepackage{tikz}
\usepackage[hidelinks]{hyperref}


\DeclareMathOperator{\diag}{diag}
\DeclareMathAlphabet\mathbfcal{OMS}{cmsy}{b}{n}
%

%
\journalname{SN PDEs with Applications}
\begin{document}

\title{Development and analysis of entropy stable no-slip wall
boundary conditions for 
the Eulerian model for viscous and heat 
conducting compressible flows}

\titlerunning{Solid wall boundary conditions for the Eulerian model}        

\author{Mohammed Sayyari
\and Lisandro Dalcin 
\and Matteo Parsani 
}


\institute{M. Sayyari \at
\email{mohammed.alsayyari@kaust.edu.sa}
\and L. Dalcin \at
\email{lisandro.dalcin@kaust.edu.sa}
\and M. Parsani \at
\email{matteo.parsani@kaust.edu.sa}\\
King Abdullah University of Science and Technology (KAUST), Computer Electrical 
and Mathematical Science and Engineering Division (CEMSE), Extreme Computing Research 
Center (ECRC), 23955-6900, Thuwal, Saudi Arabia
}

\date{Received: date / Accepted: date}

\maketitle


\begin{abstract}
    \ \\
    Nonlinear entropy stability analysis is used to derive 
    entropy stable no-slip wall boundary conditions for the Eulerian model 
    proposed by Sv\"{a}rd
    (\emph{Physica A: Statistical Mechanics and its Applications, 2018}). 
    and its spatial discretization based on entropy stable collocated discontinuous Galerkin 
    operators with the summation-by-parts property for unstructured grids.
    A set of viscous test cases of increasing complexity
    are simulated using both the Eulerian and the classic compressible Navier--Stokes 
    models. The
    numerical results obtained with the two models are 
    compared, and differences and similarities are then highlighted.
\end{abstract}


\section{Introduction}
The classical compressible Navier--Stokes (CNS) equations can be derived based
on the material (Lagrangian) derivative formulation \cite{schneiderbauer2013navier}.
In the Lagrangian sense, diffusion between gas pockets is non-existent, and thus, the continuity equation is 
hyperbolic. On the other hand, in the Eulerian model of \citet{svard2018new}, air molecules 
diffuse into other parts of the domain, and thus, the continuity equation 
is modeled as a parabolic equation.

Generally speaking, entropy conservation and entropy stability are used to preserve the 
second law of thermodynamics in the mathematical sense, \textit{i.e.}, the 
mathematical entropy function, $S$, decreases monotonically outside the 
equilibrium. This yields entropy estimates and bounds on $S$, which can 
be translated into bounds on the conservative variables, $q$, of 
the underlying model \cite{dafermos2005hyperbolic,svard2015weak}.
In this work, nonlinear entropy stability and the summation-by-parts (SBP) framework 
are used to derive entropy stable wall boundary conditions for the 
Eulerian model for the viscous and heat-conducting compressible flows proposed by 
\citet{svard2018new}
and their semidiscrete counterpart. 
As done in \cite{parsani2015entropy} 
for the classical CNS equations, a semidiscrete 
entropy estimate for the entire domain is achieved when the new boundary 
conditions are coupled with an entropy stable discrete interior operator. 
The data at the boundary are weakly imposed using a penalty flux approach 
and a simultaneous-approximation-term (SAT) penalty technique. At the semidiscrete
level, the work 
in \cite{parsani2015entropy} was sharpened in 
\cite{dalcin2019conservative} by first constructing entropy conservative wall boundary
conditions and then adding a precise interior penalty term. Here, we follow the semidiscrete analysis and derivation presented in
\cite{parsani2015entropy,dalcin2019conservative} by using a collocated
discontinuous Galerkin framework based on SBP operators constructed at the 
Legendre--Gauss--Lobatto (LGL) points. We verify the entropy conservation property of the baseline
boundary conditions by simulating the flow around a rotating sphere placed in a cubic domain.
In addition, to verify the accuracy of the proposed boundary
condition implementation, we present the convergence study for a three-dimensional 
test case constructed using the method of manufactured solutions.

In \citet{svard2018new,DOLEJSI2021110068}, numerical simulations were presented to highlight
the difference between the classical CNS equations and the Eulerian model.
Here, we also use a set of test cases of increasing complexity simulated using both the 
CNS and Eulerian models. We use the $hp$, fully-discrete 
entropy stable SSDC solver described, validated, and verified in \cite{parsani2020high}. 
The numerical results of the two models in regions near solid walls are compared, 
and differences and similarities are then highlighted.

The paper is organized as follows. In Section \ref{sec:ss-eul}, we present 
the Eulerian model in a general form, and we show its entropy stability analysis.
Then, in Section \ref{sec:wallbc}, we derive the wall boundary conditions at the continuous. 
Later, in Section \ref{sec:discrete}, we discretize the system using SBP-SAT operators 
and present a discrete entropy analysis of the Eulerian model. The 
latter step sets the context for the construction of discrete entropy 
conservative and entropy stable wall boundary conditions.
Furthermore, in Section \ref{sec:SAT-approach}, a common SAT procedure is 
presented to allow the use of a single subroutine for imposing interface coupling and
wall boundary conditions. In Section
\ref{sec:numerical}, we present a set of numerical results that demonstrate the
efficacy and accuracy of the new boundary conditions. In addition,
we present a few more test cases that highlight similarities and differences between
the two models slightly away from solid walls. Finally, conclusions are drawn in 
Section \ref{sec:conclusions}.


\section{Entropy analysis of the Eulerian model}\label{sec:ss-eul}

To analyze the Eulerian model, we begin by presenting its general form and then its
entropy analysis. The latter step will set the context for deriving solid wall 
boundary conditions that preserve the nonlinear entropy stability. 


\subsection{General form of the Eulerian model}

Sv\"{a}rd \cite{svard2018new} arrives at the following form of the Eulerian model for viscous and heat conducting 
compressible flows:
\begin{subequations}\label{eq:eulerian}
    \begin{align}
        \frac{\partial \rho}{\partial t}
        +\frac{\partial \rho \mathcal{U}_j}{\partial x_j}
        &=\frac{\partial}{\partial x_j}\left(\nu
        \frac{\partial \rho}{\partial x_j}\right),
        \label{eq:eulerian-general-continuity}\\
        \frac{\partial \rho \mathcal{U}_i}{\partial t}
        +\frac{\partial \rho \mathcal{U}_i \mathcal{U}_j}{\partial x_j}
        +\frac{\partial p}{\partial x_i}
        &=\frac{\partial}{\partial x_j}\left(\nu
        \frac{\partial \rho \mathcal{U}_i}{\partial x_j}\right),
        \label{eq:eulerian-general-momentum}\\
        \frac{\partial \rho \mathcal{E}}{\partial t}
        +\frac{\partial \rho \mathcal{E} \mathcal{U}_j}{\partial x_j}
        +\frac{\partial p \mathcal{U}_j}{\partial x_j}
        &=\frac{\partial}{\partial x_j}\left(\nu
        \frac{\partial \rho \mathcal{E}}{\partial x_j}\right),
        \label{eq:eulerian-general-energy}
    \end{align}
\end{subequations}
where $i,j=1:N_\text{DIM}$ (in MATLAB notation), and $\rho$, $\mathcal{U}_i$, $p$, and $\mathcal{E}$ are
the density, the velocity component in the $x_i$ direction, the thermodynamic pressure and the
specific total energy, respectively.

In what follows, we assume thermodynamically perfect 
(or ideal) gas. Thus, the thermodynamic pressure, $p$, is given by
\begin{equation}
    p=\rho R\mathcal{T},
\end{equation}
where $R$ is the gas constant. The specific total energy, $\mathcal{E}$, is given by the following equation
\begin{equation}
    \mathcal{E}=c_v\mathcal{T}+\frac12 \mathcal{U}_i\mathcal{U}_i,
\end{equation}
where $c_v$ is the heat capacity at constant volume and $T$ is the temperature.
The heat capacity at constant pressure, $c_p$, is related to $c_v$ through the gas constant, \textit{i.e.},
$R=c_p-c_v$. Furthermore, the generalized form of the kinematic viscosity, $\nu$, is given by
\begin{align}\label{eq:generalized-kinematic-viscosity}
    \nu=\frac{\alpha\mu}{\rho(\textbf{x},t)}+\beta(\rho,\mathcal{T}),
\end{align}
where $\mu$ is the dynamic viscosity, $\alpha\in[1,\frac43]$, 
and $\beta$ is an additional diffusion coefficient. Finally, the speed of sound, $c$, is defined as
\begin{equation}
    c=\sqrt{\gamma R\mathcal{T}},
\end{equation}
where $\gamma=\frac{c_p}{c_v}$.

\begin{remark}
    Equation \eqref{eq:generalized-kinematic-viscosity} is the most general expression of the kinematic viscosity of the 
	Eulerian model. However, in practice, \citet{svard2018new} chooses
    $\beta=0$ and thus, equation \eqref{eq:generalized-kinematic-viscosity} reduces to 
    a scaled kinematic viscosity coefficient
    \begin{align}
        \nu=\frac{\alpha\mu}{\rho(\textbf{x},t)}.
        \label{eq:scaled-kinematic-viscosity}
    \end{align}
\end{remark}


\subsection{Entropy analysis}

We cover the entropy analysis of the Eulerian model \eqref{eq:eulerian} 
by rewriting it using the following compact form,
\begin{align}\label{eq:conservative-form}
    \frac{\partial{q}}{\partial t}
    +\frac{\partial {f}^{(I)}_i}{\partial x_i}
    =\frac{\partial {f}^{(V)}_i}{\partial x_i},
\end{align}
where ${f}^{(I)}_i$, and ${f}^{(V)}_i$ are the inviscid and 
viscous fluxes in the $x_i$ direction, respectively.

The next theorem ensures that the Eulerian model is entropy stable.

\begin{theorem}\label{thm:entropy-eulerian}
    The following boundary integral
    \begin{align}\label{eq:entropy-bound}
        \int_\Gamma\left(
        {w}^\top \left(\nu\frac{dq}{dw}\right)
        \frac{\partial {w}}{\partial x_i}
        -\mathcal{F}_i\right)
        \ d\Gamma
    \end{align}
    bounds the time derivative of the entropy function, $S$, of the Eulerian model \eqref{eq:eulerian}.
\end{theorem}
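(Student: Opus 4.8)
The plan is to run the standard entropy-variable contraction on the compact form \eqref{eq:conservative-form}. First I would introduce the entropy variables $w=(\partial S/\partial q)^\top$ for the convex entropy $S=S(q)$, together with an entropy flux $\mathcal{F}_i$ obeying the compatibility relation $w^\top\,\partial f^{(I)}_i/\partial q=\partial\mathcal{F}_i/\partial q$. Since the inviscid fluxes of \eqref{eq:eulerian} are exactly the compressible Euler fluxes, this pair is inherited from the classical theory and $w$ symmetrizes the inviscid Jacobians. Contracting \eqref{eq:conservative-form} from the left with $w^\top$ then turns the first two terms into the entropy time derivative and the entropy-flux divergence,
\begin{align}
w^\top\frac{\partial q}{\partial t}=\frac{\partial S}{\partial t},
\qquad
w^\top\frac{\partial f^{(I)}_i}{\partial x_i}=\frac{\partial\mathcal{F}_i}{\partial x_i}.
\end{align}

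The next step is the viscous term. Reading the right-hand sides of \eqref{eq:eulerian-general-continuity}--\eqref{eq:eulerian-general-energy}, the viscous flux is $f^{(V)}_i=\nu\,\partial q/\partial x_i$; the chain rule $\partial q/\partial x_i=(dq/dw)\,\partial w/\partial x_i$ recasts it in the symmetric form $\nu\,(dq/dw)\,\partial w/\partial x_i$ that already appears inside \eqref{eq:entropy-bound}. Contracting with $w^\top$ and integrating by parts splits it into a divergence and a quadratic remainder,
\begin{align}
w^\top\frac{\partial f^{(V)}_i}{\partial x_i}
=\frac{\partial}{\partial x_i}\left(w^\top\nu\frac{dq}{dw}\frac{\partial w}{\partial x_i}\right)
-\frac{\partial w^\top}{\partial x_i}\,\nu\frac{dq}{dw}\,\frac{\partial w}{\partial x_i}.
\end{align}
Here $dq/dw=(\partial^2 S/\partial q^2)^{-1}$ is symmetric positive definite because $S$ is convex, and $\nu>0$ is a scalar, so the last quadratic form is nonnegative for every $i$.

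Collecting the pieces, the contracted balance becomes $\partial_t S+\partial_{x_i}\mathcal{F}_i=\partial_{x_i}(w^\top\nu(dq/dw)\partial_{x_i}w)-(\partial_{x_i}w^\top)\nu(dq/dw)(\partial_{x_i}w)$. Integrating over $\Omega$, applying the divergence theorem (which supplies the outward normal absorbed into $d\Gamma$), and discarding the nonnegative volume dissipation with the correct sign yields
\begin{align}
\frac{d}{dt}\int_\Omega S\,d\Omega
\le\int_\Gamma\left(w^\top\left(\nu\frac{dq}{dw}\right)\frac{\partial w}{\partial x_i}-\mathcal{F}_i\right)d\Gamma,
\end{align}
which is precisely the asserted bound \eqref{eq:entropy-bound}.

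I expect the main obstacle to be confirming the structural hypotheses for this particular model rather than the algebra. One must check that the Euler entropy remains admissible and convex here, so that $dq/dw$ is symmetric positive definite, and --- the genuinely new point relative to the classical CNS analysis of \cite{parsani2015entropy} --- that the \emph{parabolic} continuity equation does not destroy the symmetric viscous structure. Because $\nu$ multiplies the gradient of the full conservative vector, including the density, the density diffusion feeds an extra contribution into the quadratic form $(\partial_{x_i}w^\top)\nu(dq/dw)(\partial_{x_i}w)$; the crux is verifying that this contribution is absorbed into the same positive-definite form and produces no indefinite cross terms, so that the dissipation sign, and hence the one-sided bound, is preserved.
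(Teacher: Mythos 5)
Your proposal is correct and follows essentially the same route as the paper's proof: contracting with the entropy variables $w^\top$, identifying the time-derivative and entropy-flux-divergence terms, splitting the viscous contribution via the product rule into a divergence plus the quadratic dissipation $\frac{\partial w^\top}{\partial x_i}\left(\nu\frac{dq}{dw}\right)\frac{\partial w}{\partial x_i}$, and discarding that nonnegative term after integrating over $\Omega$. The only cosmetic difference is that the paper first rewrites the system in symmetrized form $\frac{dq}{dw}\frac{\partial w}{\partial t}+\cdots$ before contracting, while you contract the conservative form directly; the structural worry you flag at the end (density diffusion spoiling the sign) is resolved exactly as you suspect, since the whole viscous flux is $\nu\frac{\partial q}{\partial x_i}=\nu\frac{dq}{dw}\frac{\partial w}{\partial x_i}$, so the dissipation is a single quadratic form in the SPD matrix $\nu\frac{dq}{dw}$ with no extra cross terms --- the same fact the paper invokes, citing Sv\"{a}rd for the symmetrizing entropy pair.
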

\begin{proof}
    Given an entropy pair $(\mathcal{S},\mathcal{F}_i)$ 
    that symmetrizes the Eulerian model (see \cite{svard2018new}),
    we define the entropy variable 
    $w^\top=\frac{dS}{dq}$. Then, using the matrix
    $\frac{d{q}}{d{w}}$, we change the variables of
    system \eqref{eq:eulerian} to $q:=q(w)$. 
    These steps yield the symmetric form
    \begin{equation}
        \frac{d{q}}{d{w}}
        \frac{\partial {w}}{\partial t}
        +\frac{\partial {{g}}^{(I)}_i}{\partial x_i}
        =\frac{\partial {{g}}^{(V)}_i}{\partial x_i},
        \label{eq:eulerian-symmetric-form}
    \end{equation}
    where ${g}^{(I)}_i$, and ${g}^{(V)}_i$ are the symmetrized inviscid and 
    viscous fluxes, respectively. 
    Next, we multiply the symmetric system \eqref{eq:eulerian-symmetric-form} 
    from the left by the entropy variables. 
    Thus, using the chain rule, the term associated with the time derivative reads
    \begin{equation}\label{eq:dsdt}
        {w}^\top \frac{d{q}}{d{w}}
        \frac{\partial {w}}{\partial t}
        = \frac{d \mathcal{S}}{d{q}}
        \frac{d{q}}{d{w}}
        \frac{\partial {w}}{\partial t}
        =\frac{\partial \mathcal{S}}{\partial t}.
    \end{equation}
    Further, as shown by \citet{tadmor2003entropy}, the 
    contribution of the symmetrized inviscid flux term yields
    \begin{equation}\label{eq:inviscid-s-flux}
        {w}^\top \frac{\partial {{g}}^{(I)}_i}{\partial x_i}
        =\frac{\partial \mathcal{F}_i}{\partial x_i},
    \end{equation}
    \textit{i.e.}, the divergence of the entropy flux.
    Lastly, the viscous term contribution can be manipulated to obtain the following expression:
    \begin{equation}\label{eq:viscous-s-flux}
        w^\top\frac{\partial {{g}}^{(V)}_i}{\partial x_i}
        =w^\top\frac{\partial }{\partial x_i}
        \left(
            \left(\nu\frac{dq}{dw}\right)
            \frac{\partial {w}}{\partial x_i}
        \right).
    \end{equation}
    Therefore, using Equations \eqref{eq:dsdt}, \eqref{eq:inviscid-s-flux} and
    \eqref{eq:viscous-s-flux}, the scalar partial differential equation for the entropy function, 
    $\mathcal{S}$, integrated over a generic domain $\Omega$ with 
    boundary $\Gamma$ reads
    \begin{equation}
        \int_\Omega 
        \frac{\partial \mathcal{S}}{\partial t}
        +\frac{\partial \mathcal{F}_i}{\partial x_i}\ dx
        =\int_\Omega 
        w^\top\frac{\partial }{\partial x_i}
        \left(
            \left(\nu\frac{dq}{dw}\right)
            \frac{\partial {w}}{\partial x_i}
        \right)
        \ dx.
    \end{equation}
    Now, using the fundamental theorem of calculus on
    $\frac{\partial \mathcal{F}_i}{\partial x_i}$,
    and the integration-by-parts rule on 
    $w^\top\frac{\partial {{g}}^{(V)}_i}{\partial x_i}$ gives
    \begin{equation}
        \int_\Omega 
        \frac{\partial \mathcal{S}}{\partial t}
        =\int_\Gamma \left(
            w^\top
            \left(\nu\frac{dq}{dw}\right)
            \frac{\partial {w}}{\partial x_i}
            -\mathcal{F}_i
        \right)
        \ d\Gamma
        -DT,
    \end{equation}
    where
    \begin{equation}
        DT=\int_\Omega 
        \frac{\partial w^\top}{\partial x_i}
        \left(\nu\frac{dq}{dw}\right)
        \frac{\partial {w}}{\partial x_i}
        dx.
    \end{equation}
    Because the kinematic viscosity, $\nu$, is positive and the matrix 
    $\frac{dq}{dw}$ 
    is symmetric positive-definite, the product
    $\left(\nu\frac{dq}{dw}\right)$ is also a positive-definite matrix. 
    Thus, $DT$ is positive and appropriately bounded 
    (See \citet{svard2015weak}). We obtain the following expression
    \begin{equation}
    \label{eq:entropy-bound-eulerian}
        \int_\Omega 
        \frac{\partial \mathcal{S}}{\partial t}
        \ dx
            \leq \int_\Gamma \left(
            w^\top
            \left(\nu\frac{dq}{dw}\right)
            \frac{\partial {w}}{\partial x_i}
            -\mathcal{F}_i
        \right)
        \ d\Gamma
    \end{equation}
    which completes the proof.
\end{proof}

In the next section, we derive solid wall boundary conditions
such that the boundary integral on the left-hand side of 
\eqref{eq:entropy-bound-eulerian} is bounded by data.


\section{Entropy stable wall boundary condition}\label{sec:wallbc}

In this section, we derive
the boundary conditions for preserving the entropy stability 
of the system \eqref{eq:eulerian} for a solid wall. The analysis 
presented next closely follows the works of  
\cite{parsani2015entropy,dalcin2019conservative}.

The entropy-entropy flux pair $(\mathcal{S},\mathcal{F}_i)
=(-\rho s,-\rho \mathcal{U}_i s)$, used in \cite{dalcin2019conservative,parsani2015entropy}, 
is the only pair that admits a diffusive entropy
flux for the CNS model \cite{svard2018new}. Therefore, it is the only suitable
pair for a comparison between the CNS and Eulerian systems of equations.
This pair is also used in this work. 
Thus, the entropy 
variables are computed as \cite{fisher2012phd,carpenter2014entropy}
\begin{equation}
    w=\left(\frac{h}{\mathcal{T}}-s-\frac{\mathcal{U}_i\mathcal{U}_i}{2\mathcal{T}},
    -\frac{\mathcal{U}_1}{\mathcal{T}},
    -\frac{\mathcal{U}_2}{\mathcal{T}},
    -\frac{\mathcal{U}_3}{\mathcal{T}},
    -\frac{1}{\mathcal{T}}\right)^\top,
\end{equation}
where $h=c_{pr}\mathcal{T}$ is the enthalpy.

\begin{remark}
    The entropy-entropy flux pair for the Eulerian model is not necessarily restricted to 
    $(\mathcal{S},\mathcal{F}_i)=(-\rho s,-\rho \mathcal{U}_i s)$. 
    \citet{svard2018new} mentions that the Eulerian model admits a 
    diffusive entropy flux for all Harten's generalized entropies.
\end{remark}

To simplify, we rewrite the Eulerian form \eqref{eq:conservative-form} as
\begin{equation}
    \frac{\partial {q}}{\partial t}
    +\frac{\partial {f}^{(I)}_i}{\partial x_i}
    =\frac{\partial}{\partial x_i}
    (\nu\frac{\partial {q}}{\partial x_i}).
\end{equation}
From the proof of Theorem \ref{thm:entropy-eulerian}, 
we arrive at
\begin{equation}
    \frac{\partial \mathcal{S}}{\partial t}
    +\frac{\partial\mathcal{F}_i}{\partial x_i}
    ={w}^\top\frac{\partial}{\partial x_i}
    \left(\left(\nu\frac{dq}{dw}\right)
    \frac{\partial {w}}{\partial x_i}\right).
\end{equation}
Integrating the previous expression over a cubical domain,
$\Omega=(0,1)\times(0,1)\times(0,1)$,
and, without loss of generality, considering a solid wall located at $x_i=0$,
we get the following contributions
\cite{parsani2015entropy} 
\begin{equation}
    \int_\Omega \frac{\partial \mathcal{S}}{\partial t}\ dx
    =-\int_{x_i=0}\left[
    {w}^\top\left(\nu\frac{dq}{dw}\right)
    \frac{\partial {w}}{\partial x_i}-\mathcal{F}_i\right]
    dx_{j}dx_{k}
    -DT,
\end{equation}
where $j,k\neq i$. From Theorem \ref{thm:entropy-eulerian}, we get the bound
\begin{equation}\label{eq:entropy-bound-eulerian-at-a-wall}
    \int_\Omega 
    \frac{\partial \mathcal{S}}{\partial t}
    \ dx
    \leq -\int_{x_i=0}\left[
    {w}^\top\left(\nu\frac{dq}{dw}\right)
    \frac{\partial {w}}{\partial x_i}-\mathcal{F}_i\right]
    dx_{j}dx_{k}.
\end{equation}
To bound the time derivative of the entropy,
the right-hand-side of \eqref{eq:entropy-bound-eulerian-at-a-wall}
requires boundary data. For a solid viscous wall, assuming linear analysis,
five independent boundary conditions are required  \cite{svard2018new}. 
Four comes from the linear
analysis of the CNS model \cite{berg2011stable,svard2008stable},
in addition to a boundary condition that rises from
density diffusion \cite{svard2018new}. Four boundary conditions are the 
three no-slip boundary conditions, as the CNS case, and one on the
density flux presented below. The fifth condition
is the gradient of the temperature normal to the wall
(Neumann boundary condition; e.g., the adiabatic wall), or the 
temperature of the wall (the Dirichlet or isothermal wall boundary 
condition), or a mixture of Dirichlet and Neumann conditions 
(the Robin boundary condition). The last condition is the same
as for the CNS model \cite{berg2011stable,svard2008stable,svard2018new}. These five boundary conditions
lead to linear stability of both the CNS and Eulerian model \cite{berg2011stable,svard2008stable,svard2018new}.
In the remainder of this section, 
we will show the type and the form of the wall boundary conditions 
that have to be imposed to bound the estimate
\eqref{eq:entropy-bound-eulerian-at-a-wall} 
and, hence, to attain entropy stability.

\subsection{Inviscid contribution}

The inviscid contribution to the time rate of change of the entropy function,
\begin{equation}
    \int_{x_i=0}\mathcal{F}_i\ dx_{j}dx_{k}.
\end{equation}
appearing on the RHS of \eqref{eq:entropy-bound-eulerian-at-a-wall}, is treated as in
\cite[Theorem 2.2]{dalcin2019conservative}. However,
for clarity of presentation and completeness, we report it here.

\begin{theorem}\label{thm:no-slip-conditions}
    The no-slip boundary conditions $\mathcal{U}_i=0$ and $\mathcal{U}_j=\mathcal{U}_j^\text{wall}$,
    where $j\neq i$, 
	bound the inviscid contribution of the time derivative of the entropy function, \textit{i.e.}
    \begin{equation}
        \mathcal{F}_i=0,
    \end{equation}
    for an inviscid solid wall with an outfacing normal vector pointing in the $x_i$ direction.
\end{theorem}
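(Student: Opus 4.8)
The plan is to read off the result directly from the explicit form of the entropy flux fixed by the chosen pair. Because $(\mathcal{S},\mathcal{F}_i)=(-\rho s,-\rho \mathcal{U}_i s)$, the inviscid contribution to be controlled on the wall is the single scalar $\mathcal{F}_i=-\rho\,\mathcal{U}_i\,s$ restricted to the face $x_i=0$, and the entire task is to show that this quantity vanishes identically once the solid-wall conditions are imposed. The physical content is that no entropy is convected across an impermeable wall.

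First I would isolate, among the five boundary conditions listed above, the one that actually enters the normal entropy flux. Since $\mathcal{F}_i$ depends on the velocity field only through its \emph{normal} component $\mathcal{U}_i$, the relevant condition is the no-penetration requirement $\mathcal{U}_i=0$ on the wall whose outward normal points in the $x_i$ direction. Substituting $\mathcal{U}_i=0$ into $\mathcal{F}_i=-\rho\,\mathcal{U}_i\,s$ yields $\mathcal{F}_i=0$ pointwise on the face, and the inviscid boundary integral $\int_{x_i=0}\mathcal{F}_i\,dx_j dx_k$ therefore vanishes, which is exactly the assertion.

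The tangential conditions $\mathcal{U}_j=\mathcal{U}_j^\text{wall}$ with $j\neq i$ are included as part of the full no-slip set but do not influence this step: the normal flux $\mathcal{F}_i$ is insensitive to the tangential velocities, so even a moving or rotating wall (nonzero $\mathcal{U}_j^\text{wall}$) leaves $\mathcal{F}_i$ unchanged. Their effect is confined to the viscous contribution treated in the next subsection, which is where I expect the genuine work of the boundary-condition analysis to lie.

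Rather than a true obstacle, the only point requiring a moment of care is confirming that the product $\rho\,\mathcal{U}_i\,s$ is an honest zero and not an indeterminate form: with $\rho>0$ and the physical entropy $s$ finite at the wall, the factor $\mathcal{U}_i=0$ forces $\mathcal{F}_i=0$ without any hidden cancellation. With this checked, the proof closes immediately, the only inviscid datum drawn from the wall being the impermeability condition on the normal velocity.
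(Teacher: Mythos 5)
Your proof is correct and is essentially the same argument the paper relies on: the paper defers to \cite[Theorem 2.2]{dalcin2019conservative}, whose proof likewise substitutes the no-penetration condition $\mathcal{U}_i=0$ into the explicit entropy flux $\mathcal{F}_i=-\rho\,\mathcal{U}_i\,s$ to conclude it vanishes pointwise, with the tangential conditions playing no role. Your additional remark that $\rho$ and $s$ remain finite (so the product is a genuine zero) is a sensible, harmless refinement of the same approach.
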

\begin{proof}
  The proof of this theorem can be found in \cite[Theorem 2.2]{dalcin2019conservative}.
\end{proof}

\subsection{Viscous contribution}

In this subsection, we derive the viscous boundary conditions for a no-slip wall associated 
to the first term of the RHS of \eqref{eq:entropy-bound-eulerian-at-a-wall},
\textit{i.e.},
\begin{equation}
    -\int_{x_i=0}
    {w}^\top\left(\nu\frac{dq}{dw}\right)
    \frac{\partial {w}}{\partial x_i}\ 
    dx_{j}dx_{k}.
\end{equation}
The main result is provided in the following theorem.

\begin{theorem}\label{thm:wall-boundary-condition}
    The boundary conditions
    \begin{align}\label{eq:wall-boundary-conditions}
	    g(t)=\mu\frac{1}{\mathcal{T}}\frac{\partial\mathcal{T}}{\partial x_i}, 
        \qquad \textrm{and} \qquad
        \frac{1}{\rho}\frac{\partial \rho}{\partial x_i}=0,
    \end{align}
    bound the viscous contribution to the time derivative of 
    the entropy function for a no-slip solid wall with an outfacing 
    normal vector pointing in the $x_i$ direction.
\end{theorem}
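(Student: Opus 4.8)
The plan is to reduce the matrix-weighted boundary integrand to an explicit combination of normal derivatives of the primitive quantities and then to dispatch each piece with one of the prescribed conditions. First I would invoke the chain rule $\frac{\partial q}{\partial x_i}=\frac{dq}{dw}\frac{\partial w}{\partial x_i}$, valid because $q=q(w)$, to rewrite the integrand as $w^\top\bigl(\nu\frac{dq}{dw}\bigr)\frac{\partial w}{\partial x_i}=\nu\,w^\top\frac{\partial q}{\partial x_i}$; this is exactly the viscous flux of the simplified Eulerian form displayed just above the theorem, so no structural hypothesis beyond $\nu>0$ and $q=q(w)$ is needed.

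Next I would substitute the explicit entropy variables $w$ and the conservative variables $q=(\rho,\rho\mathcal{U}_1,\rho\mathcal{U}_2,\rho\mathcal{U}_3,\rho\mathcal{E})^\top$ and expand $\frac{\partial q}{\partial x_i}$ with the product rule, using $\mathcal{E}=c_v\mathcal{T}+\tfrac12\mathcal{U}_k\mathcal{U}_k$. Collecting terms, $w^\top\frac{\partial q}{\partial x_i}$ separates into three groups: a scalar multiple of $\frac{\partial\rho}{\partial x_i}$, a velocity-work group proportional to $\mathcal{U}_k\frac{\partial\mathcal{U}_k}{\partial x_i}$, and a heat group proportional to $\frac{c_v}{\mathcal{T}}\frac{\partial\mathcal{T}}{\partial x_i}$. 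The central (though routine) calculation is to verify that, once the $\frac{\partial\rho}{\partial x_i}$ terms are isolated, the remainder collapses to $-\frac{\rho}{\mathcal{T}}\bigl(2\mathcal{U}_k\frac{\partial\mathcal{U}_k}{\partial x_i}+c_v\frac{\partial\mathcal{T}}{\partial x_i}\bigr)$.

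I would then retire each group with one condition. The density-gradient condition $\frac{1}{\rho}\frac{\partial\rho}{\partial x_i}=0$ annihilates the entire $\frac{\partial\rho}{\partial x_i}$ group, a step with no CNS counterpart and precisely what the parabolic continuity equation of the Eulerian model forces. The no-slip conditions of Theorem \ref{thm:no-slip-conditions} dispatch the velocity-work group: $\mathcal{U}_i=0$ removes the normal contribution, while the tangential components equal the prescribed wall data $\mathcal{U}_j^\text{wall}$ (vanishing for a stationary wall), so this group is controlled by data. Finally, using the scaled viscosity $\nu=\alpha\mu/\rho$, so that $\nu\rho=\alpha\mu$, the surviving heat group contributes $\alpha c_v\,\frac{\mu}{\mathcal{T}}\frac{\partial\mathcal{T}}{\partial x_i}=\alpha c_v\,g(t)$ to the boundary integrand. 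Assembling the pieces, the viscous contribution equals $\alpha c_v\int_{x_i=0}g(t)\,dx_j\,dx_k$ plus the wall-data velocity-work term, hence is bounded by data, which is the assertion.

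The anticipated obstacle is bookkeeping rather than conceptual depth: the product-rule expansion must be grouped exactly so that the coefficient of $\frac{\partial\rho}{\partial x_i}$ is cleanly separated (its explicit value is immaterial once the density condition is imposed), and the heat group must be checked to carry precisely the factor that the definition of $g(t)$ is built to absorb. The one point needing genuine care is the velocity-work group for a moving wall: I would argue that it is linear in the prescribed tangential data $\mathcal{U}_j^\text{wall}$ and therefore admissible as boundary data, reducing to zero for a stationary wall, in agreement with the CNS treatment of \cite{parsani2015entropy,dalcin2019conservative}.
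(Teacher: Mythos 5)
Your strategy---rewrite the integrand as $\nu\, w^\top \frac{\partial q}{\partial x_i}$, expand in primitive variables, and dispatch the pieces with the two boundary conditions---is the same as the paper's, but your central computation is wrong, and the error is conceptual rather than bookkeeping. Because $w^\top = \frac{d\mathcal{S}}{dq}$, the chain rule gives, pointwise and without invoking any boundary condition,
\begin{equation*}
  w^\top\left(\nu\frac{dq}{dw}\right)\frac{\partial w}{\partial x_i}
  = \nu\, w^\top \frac{\partial q}{\partial x_i}
  = \nu\, \frac{\partial \mathcal{S}}{\partial x_i},
\end{equation*}
and $\mathcal{S}=-\rho s$ depends only on $\rho$ and $\mathcal{T}$, so no velocity gradients can survive. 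Explicitly, the momentum slots contribute $+\frac{\rho}{\mathcal{T}}\mathcal{U}_k\frac{\partial \mathcal{U}_k}{\partial x_i}$ and the energy slot contributes $-\frac{\rho}{\mathcal{T}}\mathcal{U}_k\frac{\partial \mathcal{U}_k}{\partial x_i}$; they cancel identically, leaving
\begin{equation*}
  w^\top \frac{\partial q}{\partial x_i}
  = (R-s)\frac{\partial \rho}{\partial x_i}
  - \frac{\rho c_v}{\mathcal{T}}\frac{\partial \mathcal{T}}{\partial x_i},
\end{equation*}
a density group and a heat group only---precisely the structure of the paper's evaluation \eqref{eq:wall-boundary-contribution}. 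Your claimed remainder $-\frac{\rho}{\mathcal{T}}\left(2\,\mathcal{U}_k\frac{\partial\mathcal{U}_k}{\partial x_i}+c_v\frac{\partial\mathcal{T}}{\partial x_i}\right)$, with its factor $2$, arises from taking the printed minus signs on the velocity components of $w$ at face value. Those signs are inconsistent with $w^\top=\frac{d\mathcal{S}}{dq}$ for $\mathcal{S}=-\rho s$ (and with the references the paper cites for $w$); with that $w$ the chain-rule identity \eqref{eq:dsdt} underlying Theorem~\ref{thm:entropy-eulerian} would fail as well, so the proof cannot be completed consistently from that starting point.

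The miscomputation then forces a step that genuinely fails. To retire the spurious velocity-work group you invoke no-slip, and for a moving wall you assert that $\mathcal{U}_j^{\text{wall}}\frac{\partial \mathcal{U}_j}{\partial x_i}$ is ``admissible as boundary data'' because it is linear in $\mathcal{U}_j^{\text{wall}}$. It is not data: $\frac{\partial \mathcal{U}_j}{\partial x_i}$ is the normal derivative of the tangential velocity at the wall (the wall shear), which is part of the unknown solution and is not prescribed. Hence your argument bounds the entropy rate by data only for a stationary wall, whereas the theorem---and the paper's entropy-conservation verification, which is performed on a rotating sphere---requires the bound for moving walls as well. With the correct entropy variables no such term exists: the viscous contribution reduces to the two groups above regardless of wall motion, the density condition $\frac{1}{\rho}\frac{\partial\rho}{\partial x_i}=0$ removes the first, the second is the prescribed $g(t)$, and no-slip is needed only for the inviscid contribution treated in Theorem~\ref{thm:no-slip-conditions}.
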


\begin{proof}
    The explicit evaluation of the viscous contribution gives
    \begin{equation}\label{eq:wall-boundary-contribution}
        -{w}^\top\left(\nu\frac{dq}{dw}\right)
        \frac{\partial {w}}{\partial x_i}
        =\mu\frac{1}{\mathcal{T}}\frac{\partial\mathcal{T}}{\partial x_i}
        -\mu(s+(\gamma-1))\frac{1}{\rho}\frac{\partial \rho}{\partial x_i}
    \end{equation}
    that contributes positively to the time derivative of 
    the entropy function.  Then, setting $\frac{1}{\rho}\frac{\partial \rho}{\partial x_i}=0$
    yields the boundary condition
    \begin{equation}
        -{w}^\top\left(\nu\frac{dq}{dw}\right)
        \frac{\partial {w}}{\partial x_i}
        =\mu\frac{1}{\mathcal{T}}\frac{\partial T}{\partial x_i}=g(t)
    \end{equation}
    which completes the proof.
\end{proof}

For the CNS model, the viscous contribution to the time derivative of 
the entropy function can be bounded by only setting 
$\kappa\frac{1}{\mathcal{T}}\frac{\partial T}{\partial x_i}$
\cite{parsani2015entropy}. However, for the Eulerian model, we require 
the impostion of two boundary conditions, \textit{i.e.} 
$g(t) = \mu\frac{1}{\mathcal{T}}\frac{\partial\mathcal{T}}{\partial x_i}$ and 
$\frac{1}{\rho}\frac{\partial \rho}{\partial x_i}=0$.

In practice with the above boundary conditions, the time derivative of the entropy function
satisfies the following relation
\begin{equation}\label{eq:wall-data}
    \int_\Omega \frac{\partial \mathcal{S}}{\partial t}\ dx
    \leq \mu\frac{1}{\mathcal{T}}\frac{\partial\mathcal{T}}{\partial x_i}
    = g(t) = \text{DATA}.
\end{equation}
Bound \eqref{eq:wall-data} can be translated into a bound for the conserved
quantities \cite{dafermos2005hyperbolic,svard2015weak}, and hence, for the
primitive variables.


\section{Semidiscrete entropy stable framework}\label{sec:discrete}

Herein, using summation-by-parts (SBP) operators \cite{svard2014review} and the
simultaneous-approximation-technique (SAT) \cite{Carpenter1994,Nordstrom1999}, 
we provide an entropy stable 
framework of any order for the 
semidiscretization of the Eulerian model \eqref{eq:eulerian} using unstructured
grids.

\subsection{SBP operators}

The one-dimensional SBP operator for the first derivative in 
the direction $x_i$ is defined as the following.
\begin{definition}
    Summation--by--parts (SBP) operator for the first derivative: 
    A matrix operator with constant coefficients, 
    $\mathcal{D} \in \mathbb{R}^{N \times N}$, is a linear SBP operator 
    of degree $p$ approximating the derivative
    $\frac{\partial}{\partial x_i}$ on the domain \\
    $x_i \in \left[ a,b\right]$ with nodal distribution $x$ having $N$ nodes, if  
    \begin{enumerate}
        \item $\mathcal{D}x_i^j=jx_i^{j-1}$, $j = 0,1,\cdots,p; $
        \item $\mathcal{D} = \mathcal{P}^{-1}\mathcal{Q}$, where the norm matrix 
        $\mathcal{P}_{}$ is symmetric positive-definite;
        \item $\mathcal{Q}+\mathcal{Q}^{\top}=\mathcal{B}$, where 
        $\mathcal{B}=\mathrm{diag}\left[-1,0,\cdots,0,1\right]$.
    \end{enumerate}
    In other words, an SBP operator of degree $p$ is one that exactly 
    differentiates monomials up to degree $p$ ($p=N - 1$).
\end{definition}
In this work, a collocated discontinuous Galerkin approach is used. 
Specifically, diagonal norm SBP operators are constructed on the 
LGL nodes.
The one-dimensional SBP operators used in this work are explicitly constructed
in \cite{carpenter2015entropy}. Their extension to two- and three-dimensions
is achieved using tensor product operations 
\cite{carpenter2015entropy,parsani2015entropy}:
\begin{equation}\label{eq:SBP-tensor-matrices}
\begin{gathered}
    \mathcal{D}_{x_1}=\mathcal{D}_N\otimes \mathcal{I}_N \otimes \mathcal{I}_N \otimes \mathcal{I}_5, \quad \cdots \quad
    \mathcal{D}_{x_3}= \mathcal{I}_N \otimes \mathcal{I}_N \otimes \mathcal{D}_N\otimes \mathcal{I}_5, \\
    \mathcal{Q}_{x_1}=\mathcal{Q}_N\otimes \mathcal{I}_N \otimes \mathcal{I}_N \otimes \mathcal{I}_5, \quad \cdots \quad
	\mathcal{Q}_{x_3}= \mathcal{I}_N \otimes \mathcal{I}_N \otimes \mathcal{Q}_N\otimes \mathcal{I}_5,\\
    \mathcal{B}_{x_1}=\mathcal{B}_N\otimes \mathcal{I}_N \otimes \mathcal{I}_N \otimes \mathcal{I}_5, \quad \cdots \quad
    \mathcal{B}_{x_3}= \mathcal{I}_N \otimes \mathcal{I}_N \otimes \mathcal{B}_N\otimes \mathcal{I}_5,\\
    \Delta_{x_1}=\Delta_N\otimes \mathcal{I}_N \otimes \mathcal{I}_N \otimes \mathcal{I}_5,  \quad \cdots \quad
    \Delta_{x_3}= \mathcal{I}_N \otimes \mathcal{I}_N \otimes \Delta_N\otimes \mathcal{I}_5,\\
    \mathcal{P}_{x_1}=\mathcal{P}_N\otimes \mathcal{I}_N \otimes \mathcal{I}_N \otimes \mathcal{I}_5, \quad \cdots \quad 
    \mathcal{P}_{x_3}= \mathcal{I}_N \otimes \mathcal{I}_N \otimes \mathcal{P}_N\otimes \mathcal{I}_5,\\
    \mathcal{P}_{x_1,x_2}=\mathcal{P}_N\otimes \mathcal{P}_N \otimes \mathcal{I}_N \otimes \mathcal{I}_,5 \quad \cdots \quad
    \mathcal{P}_{x_2,x_3}= \mathcal{I}_N \otimes \mathcal{P}_N \otimes \mathcal{P}_N\otimes \mathcal{I}_5,\\
    \mathcal{P}_{x_1,x_2,x_3}= \mathcal{P}_N \otimes \mathcal{P}_N \otimes \mathcal{P}_N\otimes \mathcal{I}_5,
\end{gathered}
\end{equation}
where $\mathcal{D}_N$, $\mathcal{Q}_N$, $\mathcal{B}_N$, $\Delta_N$ and
$\mathcal{P}_N$ are the one-dimensional SBP operators, and $\mathcal{I}_N$ is the 
identity operator.\footnote{In this work, we use $N = 5$ because in three dimensions 
the number of partial differential equations is five for both the CNS and Eulerian model.}
In this context, we chose diagonal $\mathcal{P}_N$, 
and by the definition of SBP operators
$\mathcal{D}_N=\mathcal{P}_N^{-1}\mathcal{Q}_N$ and 
$\mathcal{B}_N=\mathcal{Q}_N^\top+\mathcal{Q}_N$.
The matrices $\mathcal{B}_{(\cdot)}$ pick off the interface terms in the respective directions. 
For the spectral element discretization considered in this paper, the $\mathcal{B}_{(\cdot)}$ matrices take 
on a particularly simple form; as an example, consider $\mathcal{B}_{x_1}$, which is given as
\begin{equation*}
\begin{gathered}
\mathcal{B}_{x_1} = \mathcal{B}_{x_{1}}^{+}-\mathcal{B}_{x_{1}}^{-},\\ 
\mathcal{B}_{x_1}^{-}=\diag\left(1,0,\dots,0\right)\otimes\mathcal{I}_{N}\otimes\mathcal{I}_{N}\otimes\mathcal{I}_{5},\\
\mathcal{B}_{x_1}^{+}=\diag\left(0,\dots,0,1\right)\otimes\mathcal{I}_{N}\otimes\mathcal{I}_{N}\otimes\mathcal{I}_{5}.
\end{gathered}
\end{equation*}
For a high-order accurate scheme on a tensor product cell, they pick off the
values of whatever vector they act on (typically the solution or the flux) at 
the nodes of the two opposite faces multiplied by the orthogonal component of 
the unit normal.

When applying any of these operators to the scalar entropy equation in space,
a hat will be used to differentiate the scalar operator from the full vector
operator, e.g.
\begin{equation*}
 \widehat{\mathcal{P}} = \left(\mathcal{P}_N \otimes \mathcal{P}_N \otimes \mathcal{P}_N \right).
\end{equation*}
We finally note that in the present work, the quadrature nodes and solution nodes are collocated.

\subsection{Semidiscretization of the Eulerian model}

Using the operators shown in \eqref{eq:SBP-tensor-matrices}, we can write the semidiscretization of 
\eqref{eq:eulerian} as
\begin{equation}\label{eq:eulerian-disc}
    \frac{\partial \mathbf{q}_i}{\partial t}=\mathcal{D}_{x_i}
    \left(\mathbf{f}^{(V)}_i-\mathbf{f}^{(I)}_i\right)
    +\mathcal{P}^{-1}_{x_i}\left(\mathbf{g}^{(b)}_{x_i}+\mathbf{g}^{(In)}_{x_i}\right),
\end{equation}
where vectors $\mathbf{g}^{(b)}_{x_i}$
enforce the boundary conditions, while $\mathbf{g}^{(In)}_{x_i}$ patches
interfaces together using a SAT approach \cite{parsani2015entropy}. The bolded 
letters represent quantities, $\mathbf{q}$, and functions, $\mathbf{f}$ and
$\mathbf{g}$ for all nodes in an element.

Following \cite{fisher2012phd,carpenter2014entropy}, we use the telescoping property of
an SBP operator,
\begin{equation}
    \mathcal{D}_{x_i}\mathbf{f}^{(I)}_i
    =\mathcal{P}^{-1}_{x_i}\mathcal{Q}_{x_i}\mathbf{f}^{(I)}_i
    =\mathcal{P}^{-1}_{x_i}\Delta_{x_i}\mathcal{I}_{StoF}\mathbf{f}^{(I)}_i
    =\mathcal{P}^{-1}_{x_i}\Delta_{x_i}\bar{\mathbf{f}}^{(I)}_i,
\end{equation}
to re-write the semidiscrete counterpart to the equation \eqref{eq:eulerian-disc}
as 
\begin{equation}\label{eq:eulerian-tele}
    \frac{\partial \mathbf{q}}{\partial t}=
    \left(\mathcal{D}_{x_i}\mathbf{f}^{(V)}_i
    -\mathcal{P}^{-1}_{x_i}\Delta_{x_i}\bar{\mathbf{f}}^{(I)}_i\right)
    +\mathcal{P}^{-1}_{x_i}\left(\mathbf{g}^{(b)}_{x_i}
    +\mathbf{g}^{(In)}_{x_i}\right).
\end{equation}
The vector $\bar{\mathbf{f}}^{(I)}_i$ is defined as 
$\bar{\mathbf{f}}^{(I)}_i=\mathcal{I}_{StoF}\mathbf{f}^{(I)}_i$ and the one-dimensional telescoping operator, $\Delta_{N}$, is given by
\begin{equation*}
\label{eq:delta}
 \Delta_N = \left(
 \begin{array}{cccccc}
  -1 & 1 & 0 & 0 & 0 & 0 \\
  0 & -1 & 1 & 0 & 0 & 0 \\
  0 & 0 & \ddots & \ddots & 0 & 0 \\
  0 & 0 & 0 & -1 & 1 & 0 \\
  0 & 0 & 0 & 0 & -1 & 1
 \end{array} 
 \right).
\end{equation*}
The operator $\mathcal{I}_{StoF}$ interpolates the value at the solution nodes to 
the interfaces between nodes as shown in Figure \ref{fig:lgl-points}.

Additionally, we define $\left[\frac{dq}{dw}\right]$
as a block diagonal matrix applied to all LGL points in an element.
The kinematic viscosity $\nu$ is a scalar and
we can bring it inside the matrix $\left[\frac{dq}{dw}\right]$, \textit{i.e.}, $\left[\nu\frac{dq}{dw}\right]$.
The block diagonal matrix $\left[\nu \frac{dq}{dw}\right]$ 
commutes with the operator $\mathcal{D}_{x_i}$. Thus, we can
re-write the viscous flux as 
\begin{equation}
    \mathbf{f}^{(V)}_i
    =\nu\mathcal{D}_{x_i}\mathbf{q}_i
    =\nu\mathcal{D}_{x_i}\left[\frac{dq}{dw}\right]\mathbf{w}_i
    =\left[\nu\frac{dq}{dw}\right]
    \mathcal{D}_{x_i}\mathbf{w}_i
    =\left[\nu\frac{dq}{dw}\right]
    \mathbf{\Theta},
\end{equation}
where the symbol $\mathbf{\Theta}_{x_i}$ is the gradient of the
entropy variables in the $x_i$ direction.


\begin{figure}[hbt!]
    \centering
    \includegraphics[width=.85\textwidth]{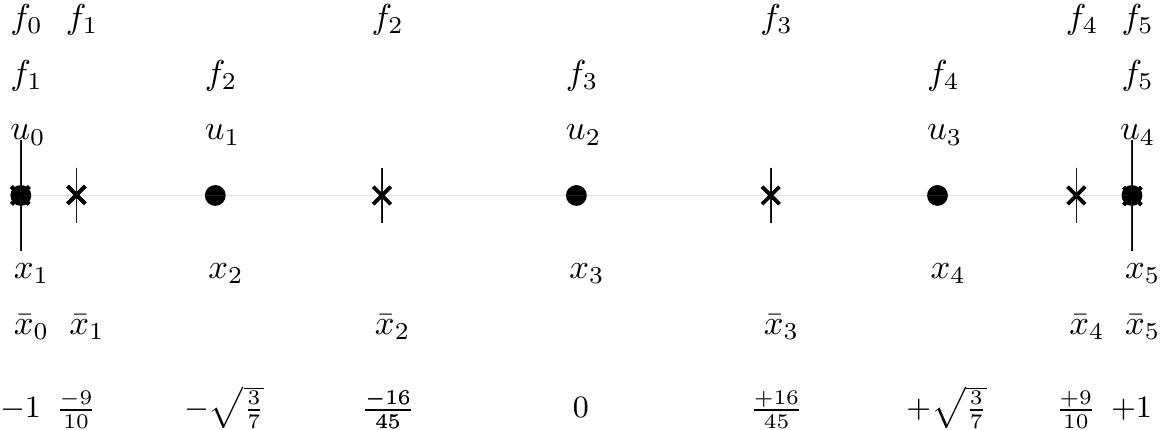}
    \caption{Onedimensional discretization using LGL points of order $p=4$. 
    $\cdot$ and $\times$ denote solution and flux points, respectively. 
    Reprinted from \cite{carpenter2014entropy} with permission.}
    \label{fig:lgl-points}
\end{figure}


Using the procedure based on local discontinuous Galerkin (LDG) and 
interior penalty approach (IP)
described in \cite{parsani2015entropy,dalcin2019conservative},
the semidiscretization \eqref{eq:eulerian-tele} can be recast as
\begin{subequations} \label{eq:eulerian-ldg-ip}
    \begin{align}
        &\frac{\partial \mathbf{q}_i}{\partial t}
        +\mathcal{P}^{-1}_{x_i}\Delta_{x_i}\bar{\mathbf{f}}^{(I)}_{i,sc}
	    -\mathcal{D}_{x_i}\left[\nu\frac{dq}{dw}\right]\mathbf{\Theta}_{x_i}
        =\mathcal{P}^{-1}_{x_i}
        \left(\mathbf{g}^{(b),q}_{x_i}
        +\mathbf{g}^{(In),q}_{x_i}\right),
        \label{eq:conservative-penalty}\\
        &\mathbf{\Theta}_{x_i}-\mathcal{D}_{x_i}\mathbf{w}
        =\mathcal{P}^{-1}_{x_i}\left(\mathbf{g}^{(b),\Theta}_{x_i}
        +\mathbf{g}^{(In),\Theta}_{x_i}\right).
        \label{eq:LDG-penalty}
    \end{align}
\end{subequations}
The terms
$\mathbf{g}^{(b),q}_{x_i}$, $\mathbf{g}^{(In),q}_{x_i}$, and
$\mathbf{g}^{(b),\Theta}_{x_i}$, $\mathbf{g}^{(In),\Theta}$
are the SAT penalty boundary ($b$) and
interface ($In$) terms on the conservative variables, $q$, and the gradient of the entropy
variables, $\Theta$, respectively.
 The contributions of the interface
penalty terms are non-zero only in the normal direction to the interface.

\begin{remark}
	To build a high-order entropy conservative (or, equivalently, entropy stable)
    semi discretization, the linear interpolation operator $\mathcal{I}_{StoF}$ is replaced
    with a nonlinear interpolation operator 
    \cite{fisher2012phd,carpenter2014entropy}.
    Thus, the operator $\mathcal{P}^{-1}_{x_i}\Delta_{x_i}\bar{\mathbf{f}}^{(I)}_{i,sc}$
    (or, $\mathcal{P}^{-1}_{x_i}\Delta_{x_i}\bar{\mathbf{f}}^{(I)}_{i,ss}$ for the entropy 
    stable case) is a nonlinear operator that is 
    a discrete counterpart to the term that 
    appears in Theorem \ref{thm:no-slip-conditions}. Thus, we arrive at the following relation
    \begin{equation}
        \mathbf{1}^\top\mathcal{P}\mathcal{P}^{-1}_{x_i}\Delta_{x_i}\bar{\mathbf{f}}^{(I)}_{i,sc}
        =\mathbf{1}^\top\widehat{\mathcal{P}}_{x_j,x_k}
        \widehat{\mathcal{B}}_{x_i}\mathbf{F}_{x_i}
        \simeq \int_{x_i=1}\mathcal{F}_i\ dx_jdx_k
        -\int_{x_i=0}\mathcal{F}_i\ dx_jdx_k.
    \end{equation}
    The flux vector $\bar{\mathbf{f}}^{(I)}_{i,sc}$
	is constructed using an entropy conservative two-point flux \cite{tadmor2003entropy}, and therefore,
    it satisfies the relation \eqref{eq:inviscid-s-flux}.
    In the case of an entropy stable flux, 
    $\bar{\mathbf{f}}^{(I)}_{i,ss}$, the resulting term is entropy
    dissipative \cite{carpenter2014entropy}.
\end{remark}


\subsection{Time derivative of entropy function}\label{sec:time-derivative-of-entropy}

Following the entropy analysis 
detailed in \cite{carpenter2014entropy,parsani2015entropy,parsani_entropy_stable_interfaces_2015}, 
the semidiscretization \eqref{eq:eulerian-ldg-ip} yields the following expression for the 
time derivative of the entropy function, $\mathcal{S}$:
\begin{align}
    \label{eq:time-derivative-of-entropy}
    &\frac{d}{d t}\mathbf{1}^\top\widehat{\mathcal{P}}\mathbfcal{S}
    +\mathbf{DT}=\mathbf{\Xi}
\end{align}
where
\begin{equation}\label{eq:DT}
    \mathbf{DT}
    =\left\|
    \left[\nu\frac{dq}{dw}\right]^{\frac12}
    \mathbf{\Theta}_{x_i}\right\|^2_\mathcal{P},
\end{equation}
and
\begin{align}
    \label{eq:Xi-general}
    \begin{split}
        \mathbf{\Xi}&=-\mathbf{1}^\top\widehat{\mathcal{P}}_{x_j,x_k}
        \widehat{\mathcal{B}}_{x_i}\mathbf{F}_{x_i}\\
        &+\mathbf{w}^\top\mathcal{P}_{x_j,x_k}\mathcal{B}_{x_i}
        \left[\nu\frac{dq}{dw}\right]
        \mathbf{\Theta}_{x_i}\\
        &+\mathbf{w}^\top\mathcal{P}_{x_j,x_k}
        \left(\mathbf{g}^{(b),q}_{x_i}
        +\mathbf{g}^{(In),q}_{x_i}\right)\\
        &+\left(
            \left[\nu\frac{dq}{dw}\right]
            \mathbf{\Theta}_{x_i}
        \right)^\top
        \mathcal{P}_{x_j,x_k}\left(\mathbf{g}^{(b),\Theta}_{x_i}
        +\mathbf{g}^{(In),\Theta}_{x_i}\right).
    \end{split}
\end{align}
The full derivation of \eqref{eq:time-derivative-of-entropy} is detailed
in Appendix \ref{sec:derivation-of-time-derivative-of-entropy}.

Now, consider a cubic element of length 1 with a solid wall with a normal vector in the $x_i$
direction,
we have that the interface penalty terms are zero, \textit{i.e.}, $\mathbf{g}^{(In),q}_{x_i}=\mathbf{g}^{(In),\Theta}_{x_i}=0$.
Therefore, equation \eqref{eq:Xi-general} reduces to
\begin{align}
    \label{eq:Xi}
    \begin{split}
        \mathbf{\Xi}&=-\mathbf{1}^\top\widehat{\mathcal{P}}_{x_j,x_k}
        \widehat{\mathcal{B}}_{x_i}\mathbf{F}_{x_i}\\
        &+\mathbf{w}^\top\mathcal{P}_{x_j,x_k}\mathcal{B}_{x_i}
        \left[\nu\frac{dq}{dw}\right]
        \mathbf{\Theta}_{x_i}\\
        &+\mathbf{w}^\top\mathcal{P}_{x_j,x_k}
        \mathbf{g}^{(b),q}_{x_i}\\
        &+\left(
            \left[\nu\frac{dq}{dw}\right]
            \mathbf{\Theta}_{x_i}
        \right)^\top
        \mathcal{P}_{x_j,x_k}\mathbf{g}^{(b),\Theta}_{x_i}.
    \end{split}
\end{align}


\subsection{Entropy stable wall boundary conditions for the 
semidiscrete system}
\label{subsec:ss_semi}

The boundary condition penalty term with respect to the conservative
variables is split into three design-order terms
plus one source boundary term:
\begin{equation}\label{eq:conservative-penalty-terms}
    \mathbf{g}^{(b),q}_{x_i}=
    \mathbf{g}^{(b,I),q}_{x_i}
    +\mathbf{g}^{(b,V),q}_{x_i}
    +\mathbfcal{M}^{(b,V)}
    +\mathbfcal{L}^{(b,V)}.
\end{equation}
The first component of each term is computed from the numerical solution, 
and the second component is constructed from a combination of the 
numerical solution and five independent boundary data,
as done in \cite{parsani2015entropy}.

The first term enforces Euler no-penetration wall
\begin{equation}\label{eq:no-penetration}
    \mathbf{g}^{(b,I),q}_{x_i}=-\frac12\mathcal{B}^-_{x_i}
    \left(\mathbf{f}^{(I)}_i
    -\mathbf{f}^{(b,I)}_{i,sc}(v^{(E)},v^{(b,I)})\right),
\end{equation}
where $\mathbf{f}^{(b,I)}_{i,sc}$ is the entropy conservative
flux and $v$ represent the primitive variables. The manufactured inviscid 
states are defined as in \cite{parsani2015entropy} for $i=1$, without loss of generality, 
\begin{align}\label{eq:inviscid-penalty-primitives}
    &v^{(E)}=\diag([1,-1,1,1,1]), \quad
    \textrm{and} \quad
    &&v^{(b,I)}=\left(\rho,-\mathcal{U}_1,\mathcal{U}_2,\mathcal{U}_3,\mathcal{T}\right)^\top.
\end{align} 

The viscous boundary term is given as
\begin{equation}\label{eq:neumann}
    \mathbf{g}^{(b,V),q}_{x_i}=\frac12\mathcal{B}^-_{x_i}
    \left(\left[\nu\frac{dq}{dw}\right]
    \mathbf{\Theta}_{x_i}-\mathbf{f}^{(b,V)}_i\right),
\end{equation}
where $\mathbf{f}^{(b,V)}_i$ will be defined 
later in this section.
Together with \eqref{eq:neumann}, the following term
\begin{equation}
    \mathbf{g}^{(b,V),\Theta}_{x_i}=\frac12\mathcal{B}^-_{x_i}
    \left(\mathbf{w}-\mathbf{w}^{(b,V)}\right),
\end{equation}
enforce the no-slip boundary condition weakly.
The analysis of the 
previous viscous terms match the analysis described by 
\cite{parsani2015entropy,dalcin2019conservative} since the 
entropy variables are the same,
and the matrix $\left[\nu\frac{dq}{dw}\right]$ is symmetric
positive-definite.

The remaining terms are defined in point-wise notation, and thus the bold notation is dropped.
The analysis of their contributions is
summarized next.
\begin{itemize}
    \item The manufactured viscous flux is defined as 
    \begin{equation}
        f^{(b,V)}_i
        =\left(\nu\frac{dq}{dw}\left(v^{(b,V)}\right)\right)
        \widetilde{\Theta}_{x_i},
    \end{equation}
    where the manufactured viscous boundary primitive variables, $v^{(b,V)}$,
    is defined, as in \cite{dalcin2019conservative}, by
    \begin{equation}\label{eq:viscous-boundary-primitive}
        v^{(b,V)}=\left(\rho,
        -\mathcal{U}_1+2\mathcal{U}_1^\text{wall},
        -\mathcal{U}_2+2\mathcal{U}_2^\text{wall},
        -\mathcal{U}_3+2\mathcal{U}_3^\text{wall},\mathcal{T}\right)^\top.
    \end{equation}
    The change of variables matrix, $\frac{dq}{dw}(v)$, as a function of
    the primitive variables, $v$, is defined in Appendix \ref{sec:dqdw}.
    The term $\widetilde{\Theta}_{x_i}$ is the manufactured gradient 
    of the entropy variables. The construction of $\widetilde{\Theta}_{x_i}$
    is summarized in Appendix \ref{sec:viscous-boundary-penalty}.
    \item The dissipative IP term is an averaged state
    of the viscous flux evaluated at the primitive state, $v$, and the 
    manufactured primitive state, $v^{(b,V)}$,
    \begin{equation}
        \mathcal{M}^{(b,V)}=-\beta\frac{f^{(V)}(v)+f^{(V)}(v^{(b,V)})}{2},
    \end{equation}
    where $\beta$ is a positive constant that is scaled with the inverse
    of the element length in the normal direction, controlling the strength 
    of the penalty term \cite{dalcin2019conservative}.
    \item The source term
    \begin{equation}
        \mathcal{L}^{(b,V)}=-\left(0,0,0,0,1\right)^\top g(t),
    \end{equation}
    imposes the heat flux boundary condition, as done in \cite{parsani2015entropy}.
\end{itemize}

We summarize the results for the RHS of \eqref{eq:time-derivative-of-entropy} in the
following three theorems. The first theorem is a result from \cite{parsani2015entropy} 
and is reprinted here for completeness

\begin{theorem}\label{thm:inviscid-boundary-penalty}
    The penalty inviscid flux contribution, $\mathbf{g}^{(b,I),q}$, 
    is entropy conservative if the vector $v^{(b,I)}$ is defined as in 
    \eqref{eq:inviscid-penalty-primitives}.
\end{theorem}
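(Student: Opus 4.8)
The plan is to show that the two inviscid terms appearing in $\mathbf{\Xi}$ in \eqref{eq:Xi}—the discrete entropy-flux term $-\mathbf{1}^\top\widehat{\mathcal{P}}_{x_j,x_k}\widehat{\mathcal{B}}_{x_i}\mathbf{F}_{x_i}$ restricted to the wall face at $x_i=0$, and the penalty contribution $\mathbf{w}^\top\mathcal{P}_{x_j,x_k}\mathbf{g}^{(b,I),q}_{x_i}$—combine so that no spurious entropy is produced at the wall. Because both terms carry the common, symmetric positive-definite tangential weight $\mathcal{P}_{x_j,x_k}$ (equivalently $\widehat{\mathcal{P}}_{x_j,x_k}$) and are both localized to the wall face through $\mathcal{B}^-_{x_i}$, I would first reduce the identity to a single representative wall node and argue pointwise; the full face statement then follows by summing against the positive tangential quadrature weights.

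Next I would substitute the penalty definition \eqref{eq:no-penetration}, so that the penalty contribution at the wall node reads $-\tfrac12\, w^\top\bigl(f^{(I)}_i-f^{(b,I)}_{i,sc}(v^{(E)},v^{(b,I)})\bigr)$, and collect it with the discrete entropy flux $\mathcal{F}_i$ arising from the $\mathcal{B}^-_{x_i}$ face. The crux is the Tadmor two-point entropy-conservation identity obeyed by $\mathbf{f}^{(b,I)}_{i,sc}$: for states with entropy variables $w_L,w_R$ and entropy-flux potentials $\psi_L,\psi_R$, where $\psi=w^\top f^{(I)}_i-\mathcal{F}_i$, the flux satisfies $(w_L-w_R)^\top\mathbf{f}^{(b,I)}_{i,sc}=\psi_L-\psi_R$. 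This is precisely the property invoked in the remark following \eqref{eq:eulerian-ldg-ip} via \cite{tadmor2003entropy,carpenter2014entropy}, and it is what \emph{entropy conservative} means for the two-point flux.

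The key structural observation is the role of $v^{(E)}=\diag([1,-1,1,1,1])$: the manufactured state $v^{(b,I)}=(\rho,-\mathcal{U}_1,\mathcal{U}_2,\mathcal{U}_3,\mathcal{T})^\top$ is the reflection of the interior state that flips only the wall-normal velocity, leaving $\rho$, $\mathcal{T}$, $s$, $h$, and $\mathcal{U}_i\mathcal{U}_i$ invariant. Its entropy variables are therefore $w^{(b,I)}=v^{(E)}w=(w_1,-w_2,w_3,w_4,w_5)^\top$, so the two states fed to the two-point flux are entropy-variable reflections of one another. Substituting $w_L=w$, $w_R=w^{(b,I)}$ into the Tadmor identity and using that $\mathcal{F}_i=-\rho\mathcal{U}_i s$ is odd in $\mathcal{U}_i$ (so the normal mass and entropy fluxes of the mirror state are equal and opposite to the interior ones, and the potential $\psi_i$ inherits the same symmetry), the $\tfrac12$ in \eqref{eq:no-penetration} together with the sign carried by $\mathcal{B}^-_{x_i}$ makes the penalty contribution cancel the physical entropy flux $\mathcal{F}_i$ at the wall. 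Hence the net inviscid contribution to $\mathbf{\Xi}$ vanishes—the discrete no-penetration wall is entropy conservative—in agreement with $\mathcal{F}_i=0$ from Theorem \ref{thm:no-slip-conditions}.

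The main obstacle is the bookkeeping in this last step: verifying that the potential/flux differences produced by the Tadmor identity, once the reflection symmetry encoded in $v^{(E)}$ and $v^{(b,I)}$ is imposed, match the discrete entropy-flux term $\widehat{\mathcal{B}}_{x_i}\mathbf{F}_{x_i}$ with exactly the right factor of $\tfrac12$ and sign so that the two cancel. The remaining manipulations—localizing to the wall node, factoring out $\mathcal{P}_{x_j,x_k}$, and identifying the entropy variables of the mirror state—are routine. Since the inviscid flux is common to the CNS and Eulerian models, I would finally note that this computation is identical to the one in \cite{parsani2015entropy}, from which the theorem is reprinted.
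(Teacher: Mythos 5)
Your overall strategy is the right one, and it is indeed the route of the proof this paper defers to (the paper's own ``proof'' is a one-line citation to \cite[Theorem 5.1]{parsani2015entropy}): reduce to a single wall node using the positive diagonal tangential weights, use the mirror state $v^{(b,I)}$ whose entropy variables are $(w_1,-w_2,w_3,w_4,w_5)^\top$, invoke Tadmor's two-point identity, and cancel against the telescoped inviscid entropy flux. However, the two steps you defer as ``bookkeeping'' are exactly where the proof lives, and as asserted they do not close. First, Tadmor's identity $(w_L-w_R)^\top f_{sc}=\psi_L-\psi_R$ only constrains the projection of $f_{sc}$ onto $w_L-w_R=(0,2w_2,0,0,0)^\top$, i.e.\ the normal-momentum component; what the cancellation actually needs is the different quantity $w^\top f_{sc}$. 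To evaluate it you must additionally show that the mass, tangential-momentum, and energy components of $f_{sc}\bigl(v,v^{(b,I)}\bigr)$ vanish. That is a reflection-equivariance property of the particular two-point flux, not a consequence of Tadmor's condition plus the oddness of $\mathcal{F}_i$ and $\psi_i$ that you invoke; for the Chandrashekar flux used in this paper it holds because each of those components carries the factor $\bar{\mathcal{U}}_1=\tfrac12\bigl(\mathcal{U}_1+(-\mathcal{U}_1)\bigr)=0$. Only with this in hand does one get $w^\top f_{sc}=\psi$.

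Second, once $w^\top f_{sc}=\psi$ is established, carrying out the arithmetic you postponed shows your cancellation claim fails with the factor as written. Per wall node, the telescoped volume term contributes $+\mathcal{F}_i$ (this is the paper's own remark relating $\widehat{\mathcal{B}}_{x_i}\mathbf{F}_{x_i}$ to $\int_{x_i=0}\mathcal{F}_i$), while the penalty \eqref{eq:no-penetration} contributes $-\tfrac12\,w^\top\bigl(f^{(I)}_i-f_{sc}\bigr)=-\tfrac12\bigl((\psi+\mathcal{F}_i)-\psi\bigr)=-\tfrac12\mathcal{F}_i$, leaving a residual $+\tfrac12\mathcal{F}_i\neq 0$ (the numerical solution does not satisfy $\mathcal{U}_i=0$ exactly at wall nodes under weak imposition). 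Exact entropy conservation requires the penalty with factor one, $-\mathcal{B}^-_{x_i}\bigl(\mathbf{f}^{(I)}_i-\mathbf{f}^{(b,I)}_{i,sc}\bigr)$, which is precisely the SAT that replaces the wall-face flux by the mirror-state entropy-conservative flux, whose numerical entropy flux $w^\top f_{sc}-\psi$ vanishes identically; that is the form for which the argument of \cite[Theorem 5.1]{parsani2015entropy} closes, and the only form consistent with the machine-precision conservation reported for the spinning-sphere test. So the honest conclusion is either to flag the $\tfrac12$ in \eqref{eq:no-penetration} as inconsistent with the theorem and run the proof with factor one, or to accept that with the stated penalty the inviscid wall contribution is $\tfrac12\mathcal{F}_i$ and the theorem fails; your assertion that ``the $\tfrac12$ together with the sign carried by $\mathcal{B}^-_{x_i}$ makes the penalty contribution cancel $\mathcal{F}_i$'' is the gap.
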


\begin{proof}
    The proof of this theorem can be found in \cite[Theorem 5.1]{parsani2015entropy}.
\end{proof}

\begin{theorem}\label{thm:viscous-boundary-penalty}
    The penalty terms for the viscous flux in the conserved 
    varaibles, $\mathbf{g}^{(b,V),q}$, and the gradient of entropy variables,
    $\mathbf{g}^{(b,V),\Theta}$,  are
    \begin{itemize}
        \item entropy conservative if the wall is adiabatic, \textit{i.e.} $g(t)=0$,
        \item entropy stable in the presence of a heatflux, $g(t)\neq 0$,
        where $g(t)$ is a given $L^2$ function.
    \end{itemize}
\end{theorem}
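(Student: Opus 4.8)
The plan is to isolate the viscous boundary contributions in the reduced entropy-rate identity \eqref{eq:Xi} and show that they telescope into a single heat-flux data term. Since the inviscid parts have already been settled by Theorems \ref{thm:no-slip-conditions} and \ref{thm:inviscid-boundary-penalty}, I would focus on the physical viscous boundary flux appearing on the second line of \eqref{eq:Xi}, namely $\mathbf{w}^\top\mathcal{P}_{x_j,x_k}\mathcal{B}_{x_i}[\nu\frac{dq}{dw}]\mathbf{\Theta}_{x_i}$, together with the viscous pieces of the penalty vectors: the term \eqref{eq:neumann}, its companion $\mathbf{g}^{(b,V),\Theta}_{x_i}$, the interior-penalty term $\mathbfcal{M}^{(b,V)}$, and the source $\mathbfcal{L}^{(b,V)}$ from the split \eqref{eq:conservative-penalty-terms}. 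Writing $A:=[\nu\frac{dq}{dw}]$, which is symmetric positive-definite, and recalling that on the $x_i=0$ face $\mathcal{B}_{x_i}$ reduces to $-\mathcal{B}^-_{x_i}$, I would substitute \eqref{eq:neumann} and $\mathbf{g}^{(b,V),\Theta}_{x_i}$ into the third and fourth lines of \eqref{eq:Xi}.

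The central algebraic step is that the symmetry $(A\mathbf{\Theta}_{x_i})^\top\mathbf{w}=\mathbf{w}^\top A\mathbf{\Theta}_{x_i}$ makes the interior--interior products cancel: the $-\mathbf{w}^\top A\mathbf{\Theta}_{x_i}$ coming from the physical flux, the $+\tfrac12\mathbf{w}^\top A\mathbf{\Theta}_{x_i}$ from \eqref{eq:neumann}, and the $+\tfrac12(A\mathbf{\Theta}_{x_i})^\top\mathbf{w}$ from $\mathbf{g}^{(b,V),\Theta}_{x_i}$ sum to zero, leaving only the manufactured cross terms $-\tfrac12\mathbf{w}^\top\mathbf{f}^{(b,V)}_i-\tfrac12(A\mathbf{\Theta}_{x_i})^\top\mathbf{w}^{(b,V)}$. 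This is exactly the reduction carried out in \cite{parsani2015entropy,dalcin2019conservative}, and it applies essentially verbatim because the entropy variables and the matrix $A$ are identical to the CNS case; I would state this correspondence explicitly so that the momentum components of the cross terms are disposed of by the same no-slip argument.

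Next I would exploit the mirror construction \eqref{eq:viscous-boundary-primitive}. Because $v^{(b,V)}$ reflects the velocity about the wall value while leaving $\rho$ and $\mathcal{T}$ unchanged, the arithmetic mean $\tfrac12(v+v^{(b,V)})$ has velocity equal to $\mathcal{U}^\text{wall}$ and density equal to $\rho$; consequently the velocity components of the cross terms collapse onto the prescribed no-slip data, and the density component produces no mismatch, which is precisely the discrete enforcement of the Eulerian-specific condition $\tfrac1\rho\frac{\partial\rho}{\partial x_i}=0$ from Theorem \ref{thm:wall-boundary-condition}. What remains is the temperature channel, reproducing the discrete analogue of the right-hand side of \eqref{eq:wall-boundary-contribution}; the source term $\mathbfcal{L}^{(b,V)}=-(0,0,0,0,1)^\top g(t)$ then contracts against the fifth entropy variable $w_5=-1/\mathcal{T}$ to supply exactly the heat-flux data $g(t)$ of \eqref{eq:wall-boundary-conditions}, in agreement with the continuous estimate \eqref{eq:wall-data}.

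Finally I would collect signs. In the adiabatic case $g(t)=0$ every data term vanishes and, by the cancellations above, the net viscous boundary contribution is identically zero, so $\mathbf{g}^{(b,V),q}_{x_i}$ and $\mathbf{g}^{(b,V),\Theta}_{x_i}$ are entropy conservative. In the heat-conducting case $g(t)\neq0$ the same cancellations hold, but the residual is the data term together with the interior-penalty contribution $\mathbfcal{M}^{(b,V)}$; using $\beta>0$ and the positive-definiteness of $A$, this contribution is non-positive, so the boundary terms are entropy stable with a bound controlled by the given $L^2$ function $g(t)$. I expect the main obstacle to be the channel-by-channel bookkeeping of the manufactured cross terms: one must verify that the velocity pieces reduce to the wall data, that the interior-penalty term carries the correct (dissipative) sign, and above all that the density piece cancels under the unchanged-$\rho$ construction. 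This last point is where the Eulerian density-diffusion term, absent from the CNS analysis of \cite{parsani2015entropy}, must be tracked explicitly to confirm that it neither contaminates conservation in the adiabatic limit nor destroys the bound in the heat-conducting case.
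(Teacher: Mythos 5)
Your setup is sound, and in one respect it is actually more faithful to \eqref{eq:Xi} than the paper's own Appendix~\ref{sec:viscous-boundary-penalty}: you keep the physical viscous flux term (the second line of \eqref{eq:Xi}), note that $\mathcal{B}_{x_i}$ reduces to $-\mathcal{B}^-_{x_i}$ at the wall, and correctly conclude that the three interior--interior products sum to zero, leaving only the manufactured cross terms $-\tfrac12\mathbf{w}^\top\mathbf{f}^{(b,V)}_i-\tfrac12\bigl(\bigl[\nu\tfrac{dq}{dw}\bigr]\mathbf{\Theta}_{x_i}\bigr)^\top\mathbf{w}^{(b,V)}$ plus the source contribution $\mathbf{w}^\top\mathcal{P}_{x_j,x_k}\mathbfcal{L}^{(b,V)}$. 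Up to that point the argument matches the paper's (the appendix instead works only with the penalty terms and attributes the data term to the surviving ``diagonal'' piece, but the essential ingredient is the same).

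The gap is in how you dispose of the cross terms, and it is exactly the step on which the paper's proof rests. The mechanism is \emph{not} that the velocity pieces collapse onto no-slip data, the density piece produces no mismatch, and a temperature channel ``remains'' to be matched by $\mathbfcal{L}^{(b,V)}$: if a temperature channel of the cross terms survived, it would be $\mu\frac{1}{\mathcal{T}}\frac{\partial\mathcal{T}}{\partial x_i}$ evaluated on the \emph{numerical solution} --- a solution-dependent quantity, not data --- and then setting $g(t)=0$ would not annihilate the right-hand side, so adiabatic entropy conservation could not be concluded. What actually happens is that the two cross terms cancel each other identically, node by node, because of the reflection structure of the manufactured state: for a stationary wall, with $D=\diag(1,-1,-1,-1,1)$, the definitions \eqref{eq:viscous-boundary-primitive} and of $\widetilde{\Theta}_{x_i}$ give
\begin{equation*}
\mathbf{w}^{(b,V)}=D\mathbf{w},\qquad
\frac{dq}{dw}\bigl(v^{(b,V)}\bigr)=D\,\frac{dq}{dw}(v)\,D,\qquad
\widetilde{\Theta}_{x_i}=-D\mathbf{\Theta}_{x_i},
\end{equation*}
(the factor $\nu$ is unchanged since it depends only on $\rho$), whence $\mathbf{w}^\top\mathbf{f}^{(b,V)}_i=-\bigl(\bigl[\nu\tfrac{dq}{dw}\bigr]\mathbf{\Theta}_{x_i}\bigr)^\top\mathbf{w}^{(b,V)}$ by symmetry of $\frac{dq}{dw}$, so the two cross terms sum to zero. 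This is precisely the assertion in Appendix~\ref{sec:viscous-boundary-penalty} that the second terms in \eqref{eq:conservative-viscous-penalty} and \eqref{eq:entropy-viscous-penalty} cancel out. Consequently the entire data contribution in \eqref{eq:viscous-boundary-penalty} comes from $\mathbfcal{L}^{(b,V)}$ alone, and conservation for $g(t)=0$ and boundedness for $g(t)\in L^2$ follow. Note also that your appeal to \cite{parsani2015entropy} ``verbatim'' is not a substitute for this computation: the entropy variables coincide, but the matrix $\bigl[\nu\tfrac{dq}{dw}\bigr]$ is specific to the Eulerian model and is not the CNS viscous coefficient matrix, which is why the density/temperature sign flips in $\widetilde{\Theta}_{x_i}$ must be tracked explicitly here.
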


\begin{proof}
    Substituting the expressions for 
    $\mathbf{g}^{(b,V),q}$, and,
    $\mathbf{g}^{(b,V),\Theta}$ into \eqref{eq:Xi}, yields 
    \begin{equation}\label{eq:viscous-boundary-penalty}
        \frac{d}{d t}\mathbf{1}^\top\widehat{\mathcal{P}}\mathbfcal{S}
        +\mathbf{DT}
        =\mathbf{1}^\top\widehat{\mathcal{P}}_{x_j,x_k}g(t).
    \end{equation}
    The function $g(t)$ 
    for an adiabatic wall is $g(t)=0$. Thus the RHS is zero, and the scheme
    is entropy conservative. Otherwise, the term $g(t)$ is bounded by
    data. Hence, the RHS is bounded, completing the proof.
\end{proof}
The full details on the computation of \eqref{eq:viscous-boundary-penalty}
are reported in Appendix \ref{sec:viscous-boundary-penalty}.

\begin{theorem}\label{thm:dissipative-internal-penalty-term}
    The IP term, $\mathbfcal{M}^{(b,V)}$ , added to \eqref{eq:conservative-penalty-terms}
    is entropy dissipative.
\end{theorem}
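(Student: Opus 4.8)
The plan is to isolate the contribution of $\mathbfcal{M}^{(b,V)}$ to the right-hand side $\mathbf{\Xi}$ of the semidiscrete entropy balance \eqref{eq:time-derivative-of-entropy} and to show that it is non-positive, which is precisely the notion of entropy dissipativity in this sign convention (a term that makes $\frac{d}{dt}\mathbf{1}^\top\widehat{\mathcal{P}}\mathbfcal{S}$ smaller). Since $\mathbfcal{M}^{(b,V)}$ is one of the four summands of the conservative penalty $\mathbf{g}^{(b),q}_{x_i}$ in \eqref{eq:conservative-penalty-terms} and does not appear in the gradient penalty, it enters $\mathbf{\Xi}$ explicitly only through the third term of \eqref{eq:Xi}, namely $\mathbf{w}^\top\mathcal{P}_{x_j,x_k}\mathbf{g}^{(b),q}_{x_i}$. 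I would therefore begin by substituting the definition of $\mathbfcal{M}^{(b,V)}$, recalling that it is supported on the wall nodes through $\mathcal{B}^-_{x_i}$, so that its contribution reads
\begin{equation*}
-\frac{\beta}{2}\,\mathbf{w}^\top\mathcal{P}_{x_j,x_k}\mathcal{B}^-_{x_i}\left(f^{(V)}(v)+f^{(V)}(v^{(b,V)})\right).
\end{equation*}

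Next I would express the two viscous fluxes through the symmetric positive-definite coefficient matrix, writing $f^{(V)}=\left[\nu\frac{dq}{dw}\right]\mathbf{\Theta}_{x_i}$, and invoke the LDG penalty $\mathbf{g}^{(b,V),\Theta}_{x_i}=\tfrac12\mathcal{B}^-_{x_i}(\mathbf{w}-\mathbf{w}^{(b,V)})$, which ties the penalized gradient at the wall to the entropy-variable jump $\mathbf{w}-\mathbf{w}^{(b,V)}$. Averaging the coefficient matrices evaluated at $v$ and $v^{(b,V)}$ then collapses the contribution into the single quadratic form
\begin{equation*}
-\frac{\beta}{2}\,(\mathbf{w}-\mathbf{w}^{(b,V)})^\top\mathcal{P}_{x_j,x_k}\,\overline{\left[\nu\frac{dq}{dw}\right]}\,(\mathbf{w}-\mathbf{w}^{(b,V)}),
\end{equation*}
where $\overline{\left[\nu\frac{dq}{dw}\right]}=\tfrac12\big(\left[\nu\frac{dq}{dw}\right]\!(v)+\left[\nu\frac{dq}{dw}\right]\!(v^{(b,V)})\big)$. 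Because $\nu>0$ and $\frac{dq}{dw}$ is symmetric positive-definite at both states, this average is symmetric positive-definite; together with the positive surface quadrature weights contained in $\mathcal{P}_{x_j,x_k}$ and the positive constant $\beta$, the whole expression is $\le 0$, establishing dissipativity. This mirrors the argument used for the viscous penalty in Theorem \ref{thm:viscous-boundary-penalty}, where positive-definiteness of the same matrix was the decisive ingredient.

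The hard part will be justifying the collapse to the pure jump quadratic form. Contracting the averaged flux against the interior entropy variables $\mathbf{w}$ rather than against the jump produces, after the splitting $\mathbf{w}=\tfrac12(\mathbf{w}+\mathbf{w}^{(b,V)})+\tfrac12(\mathbf{w}-\mathbf{w}^{(b,V)})$, a mixed term $(\mathbf{w}+\mathbf{w}^{(b,V)})^\top\overline{\left[\nu\frac{dq}{dw}\right]}(\mathbf{w}-\mathbf{w}^{(b,V)})$ that must be shown not to spoil the sign. For the classical CNS model this difficulty is absent because the continuity equation carries no diffusion and the corresponding row and column of the viscous matrix vanish; for the Eulerian model the density diffusion makes the matrix full, so one must use the reflection structure of the manufactured state \eqref{eq:viscous-boundary-primitive} — under which $\mathbf{w}$ and $\mathbf{w}^{(b,V)}$ agree in their density and temperature components and differ only, by a sign, in the momentum components — to verify that this mixed contribution either cancels or is absorbed into the already entropy-conservative viscous penalty of Theorem \ref{thm:viscous-boundary-penalty}. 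Controlling this term is the crux of the proof and the place where the Eulerian analysis genuinely departs from its CNS counterpart.
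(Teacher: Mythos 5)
Your isolation of the $\mathbfcal{M}^{(b,V)}$ contribution through the third term of \eqref{eq:Xi} is correct, and the target statement (a non-positive quadratic form in the jump, with the averaged symmetric positive-definite coefficient matrix) is the right one. But the proposal is not a proof: as you yourself flag in your last paragraph, the entropy balance contracts $\mathcal{M}^{(b,V)}$ against the interior entropy variables $w$, not against the jump $\Delta w = w-w^{(b,V)}$, and you never establish that the resulting mixed term $\tfrac12\,(w+w^{(b,V)})^\top\overline{A}\,\Delta w$ (writing $A(v)=\nu\tfrac{dq}{dw}(v)$ and $\overline{A}=\tfrac12\bigl(A(v)+A(v^{(b,V)})\bigr)$) vanishes or has the right sign; you only assert that it ``must be shown.'' Since this is, by your own admission, the crux, the decisive step is missing. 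There is also a misidentification of the mechanism by which the jump enters: in the paper's construction (Appendix \ref{sec:dissipative-internal-penalty-term}) both flux states in $\mathcal{M}^{(b,V)}$ are built directly on the jump, $f^{(V)}(v^{(b,V)})=\bigl(\nu\tfrac{dq}{dw}(v^{(b,V)})\bigr)\Delta w^{(b,V)}\cdot\hat{n}$, so the LDG penalty $\mathbf{g}^{(b,V),\Theta}_{x_i}$ plays no role here; invoking it to ``tie the penalized gradient to the jump'' would not be valid anyway, since that penalty only perturbs $\mathbf{\Theta}_{x_i}$, it does not set it equal to the jump.

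The cancellation you need is in fact true, but it requires an argument about the matrix, not only about the vectors. From Appendix \ref{sec:dqdw}, the entries of $\tfrac{dq}{dw}(v)$ coupling the mass and energy rows to the momentum columns are odd in the velocity; for a stationary wall (where $v^{(b,V)}$ flips the sign of the velocity) these entries are therefore equal and opposite at $v$ and $v^{(b,V)}$ and cancel in $\overline{A}$, leaving $\overline{A}$ block-diagonal with respect to the (mass, energy) and momentum components. Since $w+w^{(b,V)}$ is supported on the mass/energy entries and $\Delta w$ on the momentum entries, the mixed term is identically zero and $w^\top\mathcal{M}^{(b,V)}=-\tfrac{\beta}{2}\,\Delta w^\top\overline{A}\,\Delta w\le 0$, completing your argument. (Beware that your claimed support structure of $w\pm w^{(b,V)}$ is exact only for $\mathcal{U}^\text{wall}=0$; for a moving wall the first entropy variable also jumps, because $\|{-\mathcal{U}+2\mathcal{U}^\text{wall}}\|\neq\|\mathcal{U}\|$.) The paper takes a different, brute-force route: it evaluates $w^\top\mathcal{M}^{(b,V)}$ in closed form with Mathematica and obtains \eqref{eq:dissipative-internal-penalty-term}, whose non-positivity is read off directly; the quartic term $\|\Delta\mathcal{U}\|^4$ there is exactly the $\rho\,\mathcal{U}_i\mathcal{U}_j/R$ part of the momentum block that survives the averaging, so the two routes are consistent --- yours is the more structural and more illuminating one, but it remains a sketch until the block cancellation above is actually proved.
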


\begin{proof}
    By expanding the penalty terms with respect to the conservative 
    variables, $g^{(b,V),q}$, and focusing only
    on the dissipative IP term, $\mathcal{M}^{(b,V)}$, in 
    \eqref{eq:time-derivative-of-entropy}, we arrive at
    \begin{equation}\label{eq:dissipative-internal-penalty-term}
        w^\top\mathcal{M}^{(b,V)}
        =-\frac{2\beta \alpha \mu}{R\mathcal{T}^2}\|\Delta \mathcal{U}\|^2
        \left(\|\Delta \mathcal{U}\|^2+R\mathcal{T}\right).
    \end{equation}
    Thus, $\mathcal{M}^{(b,V)}$ is entropy dissipative because all the 
    parameters and variables appearing on the RHS of \eqref{eq:dissipative-internal-penalty-term} 
    are positive. This completes the proof.
\end{proof}
The full details on the compution of \eqref{eq:dissipative-internal-penalty-term}
are reported in Appendix \ref{sec:dissipative-internal-penalty-term}.


\section{A common SAT procedure for the imposition of wall boundary conditions and interior 
interface coupling}\label{sec:SAT-approach}

The proposed approach for imposing the solid wall
boundary conditions allow for a SAT implementation that is identical
to the interface treatment shown in \cite{parsani_entropy_stable_interfaces_2015}.
We can use a single subroutine with different inputs corresponding to the imposition of
the interior interface couplings, or the adiabatic solid wall, or the wall with a prescribed heat entropy flow. In fact,
the interior interface coupling can be written as (see equations (16a-16d) in \cite{parsani_entropy_stable_interfaces_2015})
\begin{subequations}\label{eq:left-right-ldg-ip}
    \begin{align}
        \label{eq:left-ldg-ip-1}
        &\frac{\partial \mathbf{q}_l}{\partial t} 
        +\left(
            \mathcal{P}^{-1}_{x_{i,l}}\Delta_{x_{i,l}}\bar{\mathbf{f}}^{(I)}_{i,l,sc}
            -\mathcal{D}_{x_{i,l}} \left[\nu\frac{dq}{dw}\right] 
            \mathbf{\Theta}_{x_{i,l}}
        \right)
        =\mathcal{P}^{-1}_{x_{i,l}}\mathbf{g}_{x_{i,l}}^{(In),q},
        \\
        \label{eq:left-ldg-ip-2}
        &\mathbf{\Theta}_{x_{i,l}}
        -\mathcal{D}_{x_{i,l}}\mathbf{w} 
        =\mathcal{P}^{-1}_{x_{i,l}}\mathbf{{g}}_{x_{i,l}}^{(In),\Theta},
        \\
        \label{eq:right-ldg-ip-1}
        &\frac{\partial \mathbf{q}_r}{\partial t} 
        + \left(
            \mathcal{P}^{-1}_{x_{i,r}}
            \Delta_{x_{i,r}}
            \bar{\mathbf{f}}^{(I)}_{i,r,sc}
            -\mathcal{D}_{x_{i,r}}\left[\nu\frac{dq}{dw}\right] 
            \mathbf{\Theta}_{x_{i,r}}
        \right)
        =\mathcal{P}^{-1}_{x_{i,r}}\mathbf{{g}}_{x_{i,r}}^{(In),q},
        \\
        \label{eq:right-ldg-ip-2}
        &\mathbf{\Theta}_{x_{i,r}}
        -\mathcal{D}_{x_{i,r}}\mathbf{w}
        =\mathcal{P}^{-1}_{x_{i,r}}\mathbf{g}_{x_i,r}^{(In),\Theta}.
    \end{align}
\end{subequations}
The above equations have exactly the same structure as the LDG-IP approach used for the 
imposition of the solid wall boundary conditions except for the boundary 
penalty interface terms, $\mathbf{{g}}_{x_{i,r}}^{(b),\cdot}$ 
in equation \eqref{eq:eulerian-ldg-ip}, which are replaced by the interior penalty 
interface coupling terms, $\mathbf{{g}}_{x_{i,r}}^{(In),\cdot}$ in 
equations \eqref{eq:left-right-ldg-ip}.


\section{Numerical results}\label{sec:numerical}

In this section, we numerically investigate the proposed  entropy stable
wall boundary conditions. 
The numerical experiments reported in this paper are performed with 
the entropy stable collocated Discontinuous Galerkin algorithm and relaxation Runge--Kutta schemes implemented in the 
$hp$-adaptive, unstructured, curvilinear grid framework SSDC
\cite{parsani2020high}. SSDC is developed in the AANSLab, which is part of the Extreme Computing Research
Center at King Abdullah University of Science and Technology (KAUST).
The core entropy stable adaptive algorithms of SSDC are built on top of the Portable and Extensible Toolkit for Scientific
computing (PETSc) \cite{petsc-user-ref}, its mesh topology abstraction (DMPLEX)
\cite{knepley2009mesh}, and its scalable ODE/DAE solver library
\cite{petsc-ts}.
The SSDC framework uses a non-dimensional formalism; thus, all 
quantities are scaled to units. For the Eulerian model, the kinematic 
viscosity is scaled using $\alpha=1$. 
We use the two-point entropy conservative flux 
presented by \citet{chandrashekar2013kinetic}.
Furthermore, all the simulations have been
performed in double (machine) precision. For all the cases, we use the Runge--Kutta 
scheme of \citet{3bs} with adaptive time-step and both relative and absolute
tolerances set to $10^-8$. The meshes are generated using Gmsh 
\cite{geuzaine2009gmsh}, and Pointwise V18.3 released in September, 
2019. The SSDC Eulerian and SSDC CNS data computed for 
this section is available in \url{http://doi.org/10.5281/zenodo.5041436}.


\subsection{Convergence study}


\begin{figure}[htbp!]
    \centering
    \includegraphics[width=0.9\columnwidth,trim = 0 0 -5 0,clip]
    {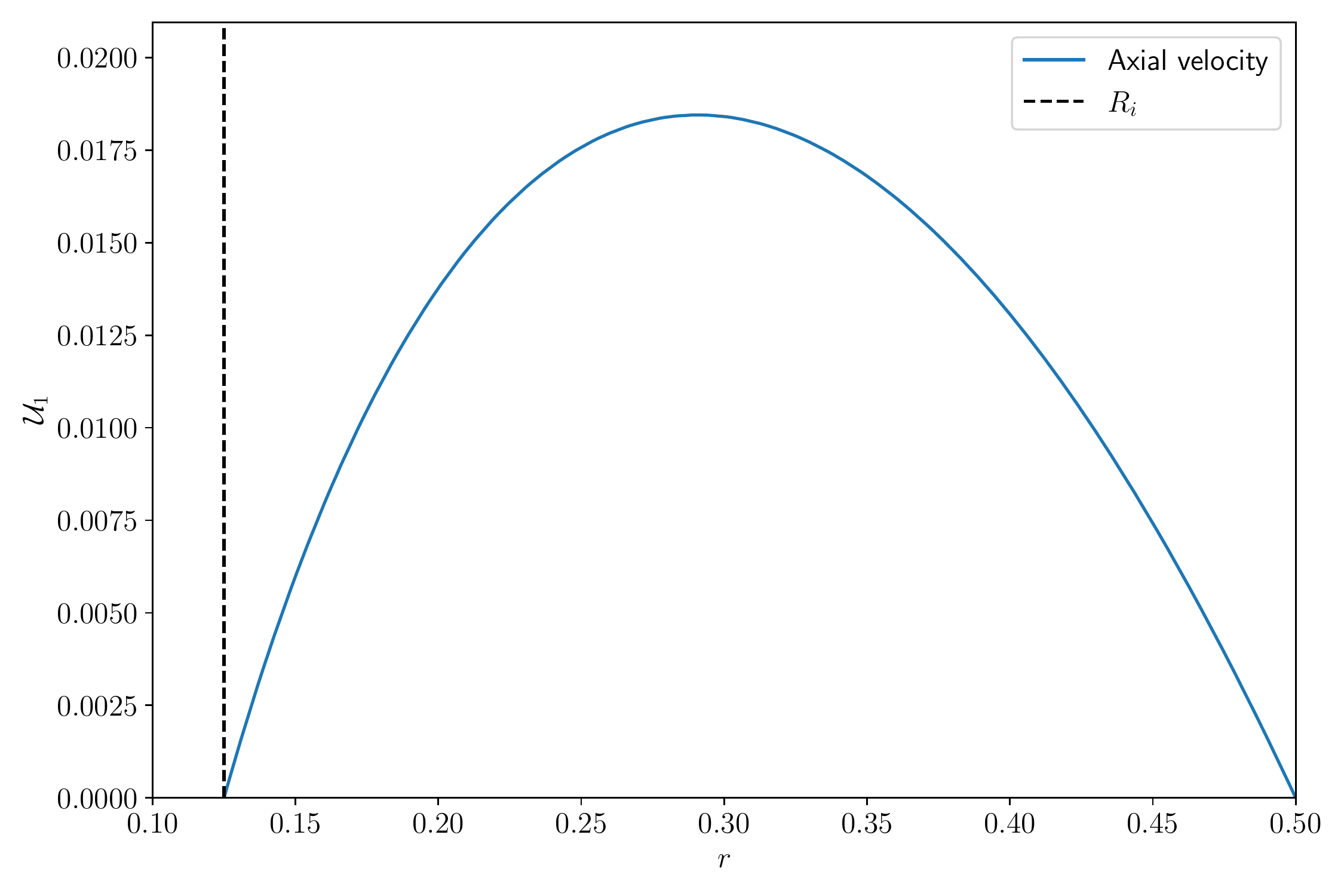}
    \caption{Axial velocity, $\mathcal{U}_1$, in a pipe with annular section; 
    Eulerian model with $\text{Re}=1$, $\text{Ma}=1e-3$.}
\end{figure}

\begin{table}[h!]
    \begin{center}
        \begin{tabular}{l|cccccc}
            Grid & $L^1$ & Rate & $L^2$ & Rate & $L^\infty$ & Rate\\
            \hline
            $4$ & $8.36276e-04$ & - & $1.77874e-03$ & - & $5.70764e-03$ & -\\
            $8$ & $1.25039e-04$ & -2.742 & $3.44481e-04$ & -2.368 & $1.49599e-03$ & -1.932\\
            $16$ & $1.99325e-05$ & -2.649 & $5.46416e-05$ & -2.656 & $3.12259e-04$ & -2.260\\
            $32$ & $2.64237e-06$ & -2.915 & $7.50755e-06$ & -2.864 & $5.07517e-05$ & -2.621\\
            $64$ & $3.31631e-07$ & -2.994 & $9.65089e-07$ & -2.960 & $7.22735e-06$ & -2.812\\
            $128$ & $4.12357e-08$ & -3.008 & $1.21565e-07$ & -2.989 & $9.60653e-07$ & -2.911
        \end{tabular}
        \caption{Convergence study for the flow in a pipe with annular cross-section at $\text{Re}=1$ and $\text{Ma}=1e-3$. 
        axial velocity error; solution polynomial degree: $p=2$.}
        \label{tab:plate-convergence-p2}
    \end{center}
\end{table}

\begin{table}[h!]
    \begin{center}
        \begin{tabular}{l|cccccc}
            Grid & $L^1$ & Rate & $L^2$ & Rate & $L^\infty$ & Rate\\
            \hline
            $4$ & $2.00203e-04$ & - & $3.85016e-04$ & - & $1.54031e-03$ & -\\
            $8$ & $2.71540e-05$ & -2.882 & $5.67563e-05$ & -2.762 & $3.16887e-04$ & -2.281\\
            $16$ & $2.15610e-06$ & -3.655 & $5.69093e-06$ & -3.318 & $4.42491e-05$ & -2.840\\
            $32$ & $1.24768e-07$ & -4.111 & $4.40714e-07$ & -3.691 & $4.63001e-06$ & -3.257\\
            $64$ & $6.57969e-09$ & -4.245 & $2.86670e-08$ & -3.942 & $4.05012e-07$ & -3.515\\
            $128$ & $2.95557e-10$ & -4.477 & $1.63672e-09$ & -4.131 & $3.12241e-08$ & -3.697
        \end{tabular}
        \caption{Convergence study for the flow in a pipe with annular cross-section at $\text{Re}=1$ and $\text{Ma}=1e-3$. 
        axial velocity error; solution polynomial degree: $p=3$.}
        \label{tab:plate-convergence-p3}
    \end{center}
\end{table}

\begin{table}[h!]
    \begin{center}
        \begin{tabular}{l|cccccc}
            Grid & $L^1$ & Rate & $L^2$ & Rate & $L^\infty$ & Rate\\
            \hline
              $4$ & $4.54267e-05$ & -      & $9.55211e-05$ & -      & $4.70479e-04$ & -\\
              $8$ & $4.47553e-06$ & -3.343 & $9.58936e-06$ & -3.316 & $6.31346e-05$ & -2.898\\
             $16$ & $1.88611e-07$ & -4.569 & $5.68142e-07$ & -4.077 & $5.14343e-06$ & -3.618\\
             $32$ & $5.97100e-09$ & -4.981 & $2.34227e-08$ & -4.600 & $2.75719e-07$ & -4.221\\
             $64$ & $1.85738e-10$ & -5.007 & $7.90368e-10$ & -4.889 & $1.13276e-08$ & -4.605\\
            $128$ & $5.76894e-12$ & -5.009 & $2.49773e-11$ & -4.984 & $3.99322e-10$ & -4.826
        \end{tabular}
        \caption{Convergence study for the flow in a pipe with annular cross-section at $\text{Re}=1$ and $\text{Ma}=1e-3$. 
        axial velocity error; solution polynomial degree: $p=4$.}
        \label{tab:plate-convergence-p4}
    \end{center}
\end{table}


In this section, we use the method of manufactured solutions (MMS) to verify the accuracy 
and correct implementation of the proposed boundary conditions.  We use a pipe with an annular section
and length $1$. Similarly to \cite{dalcin2019conservative},
this case is considered for two reasons.
First, it has an analytic solution for incompressible 
flow that cannot be represented in polynomial space \cite{rosenhead_bl_book}.
Second, it allows exercising the high-order mesh capabilities and better
approximate the circular geometry of the pipe.
The following solution is used for the axial velocity:
\begin{equation}\label{eq:mms-primitive}
    \mathcal{U}_1=\frac{G}{4\mu}\left((R_1^2-r^2)+(R_2^2-R_1^2)
    \frac{\log(r/R_1)}{\log(R_2/R_1)}\right),
\end{equation}
where $R_o=0.5$, $R_o/R_i=4$, and $G/\mu=1$. We consider uniform density and temperature,
and zero nonaxial velocities.
The parameters used are $\text{Re}=1$, $\text{Ma}=1e-3$ and $\alpha=1$.
No-slip adiabatic wall boundary conditions are used on the outer and inner cylinder whereas,
periodic boundary conditions are used on the remaining boundaries.
We use Mathematica to compute the source term of the Eulerian model \cite{Mathematica}.

The error calculation uses the following discrete norms
\begin{equation}
    \begin{split}
        &\text{Discrete }L^{1}: \|\mathbfcal{U}\|_{L^{1}}
        =\|\Omega\|^{-1}\sum\limits_{k=1}^{K}\mathbf{1}_{N_{k}}^\top
        \mathcal{P}_k\mathbfcal{J}_k\text{abs}\left(\mathbfcal{U}_k\right),\\
        &\text{Discrete }L^{2}: \|\mathbfcal{U}\|_{L^{2}}^2
        =\|\Omega\|^{-1}\sum\limits_{k=1}^{K}\mathbfcal{U}_k^\top
        \mathcal{P}_k\mathbfcal{J}_k\mathbfcal{U}_k,\\
        &\text{Discrete }L^{\infty}: \|\mathbfcal{U}\|_{L^{\infty}}
        =\max\limits_{k=1\dots K}\text{abs}\left(\mathbfcal{U}_k\right),
    \end{split}
\end{equation}
where $\|\Omega\|$ represents the volume of the computational domain, 
$\mathbfcal{J}_k$ is the metric Jacobian of the curvilinear transformation from physical
space to computational space of the $k$-th hexahedral element, and $K$ is the
total number of non-overlapping hexahedral elements in the mesh.

The results of the convergence study are shown in Tables \ref{tab:plate-convergence-p2}, \ref{tab:plate-convergence-p3}
and \ref{tab:plate-convergence-p4}, where the first column reprisents
the number of elements in the radial and angular coordinates. We observe that 
the computed order of accuracy is approximately $(p+1)$.


\subsection{Spinning sphere: verification of entropy conservation}


\begin{figure}[htbp!]
    \begin{subfigure}{.45\textwidth}
        \centering
        \includegraphics[width=\columnwidth,trim = 0 0 -5 0,clip]
            {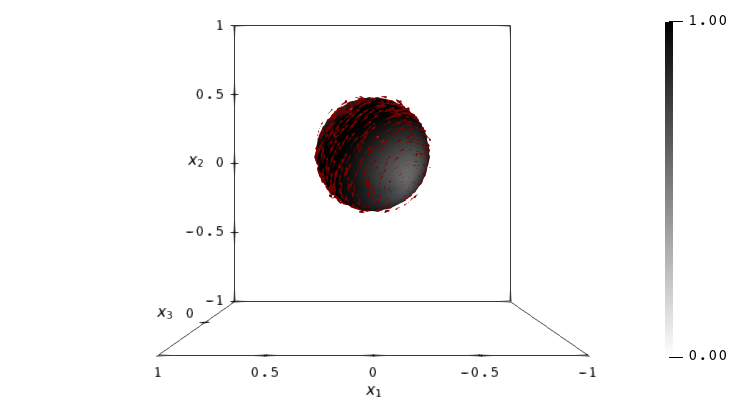}
        \caption{Overview of the spinning sphere.}
        \label{fig:sphere-full}
    \end{subfigure}
    \begin{subfigure}{.45\textwidth}
        \centering
        \includegraphics[width=\columnwidth,trim = 0 0 -5 0,clip]
            {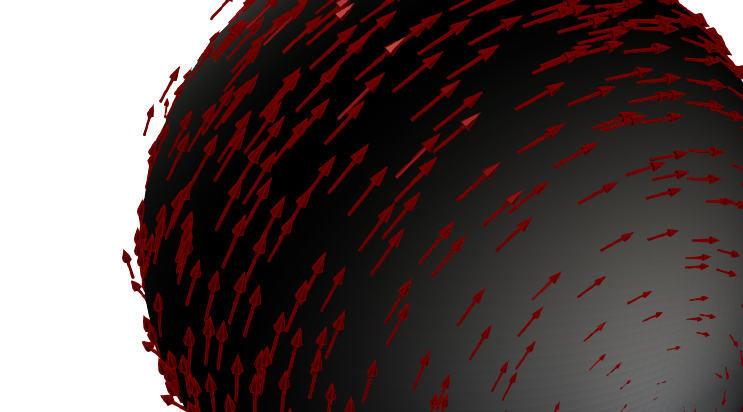}
        \caption{A closer look at the velocity field near the spinning
        sphere.}
        \label{fig:spinning}
    \end{subfigure}
    \caption{Spinning sphere enclosed in a cubical box; 
        Eulerian model with $\text{Re}=1$, and $\text{Ma}=0.05$. 
        The magnitude of the pointwise velocity is used to scale the arrow glyphs
      vectors and to color the surface of the sphere.}
        \label{fig:spinning-sphere}
\end{figure}

\begin{figure}[htbp!]
\begin{subfigure}{.49\textwidth}
    \centering
    \includegraphics[width=0.975\columnwidth,trim = 0 0 -5 0,clip]
        {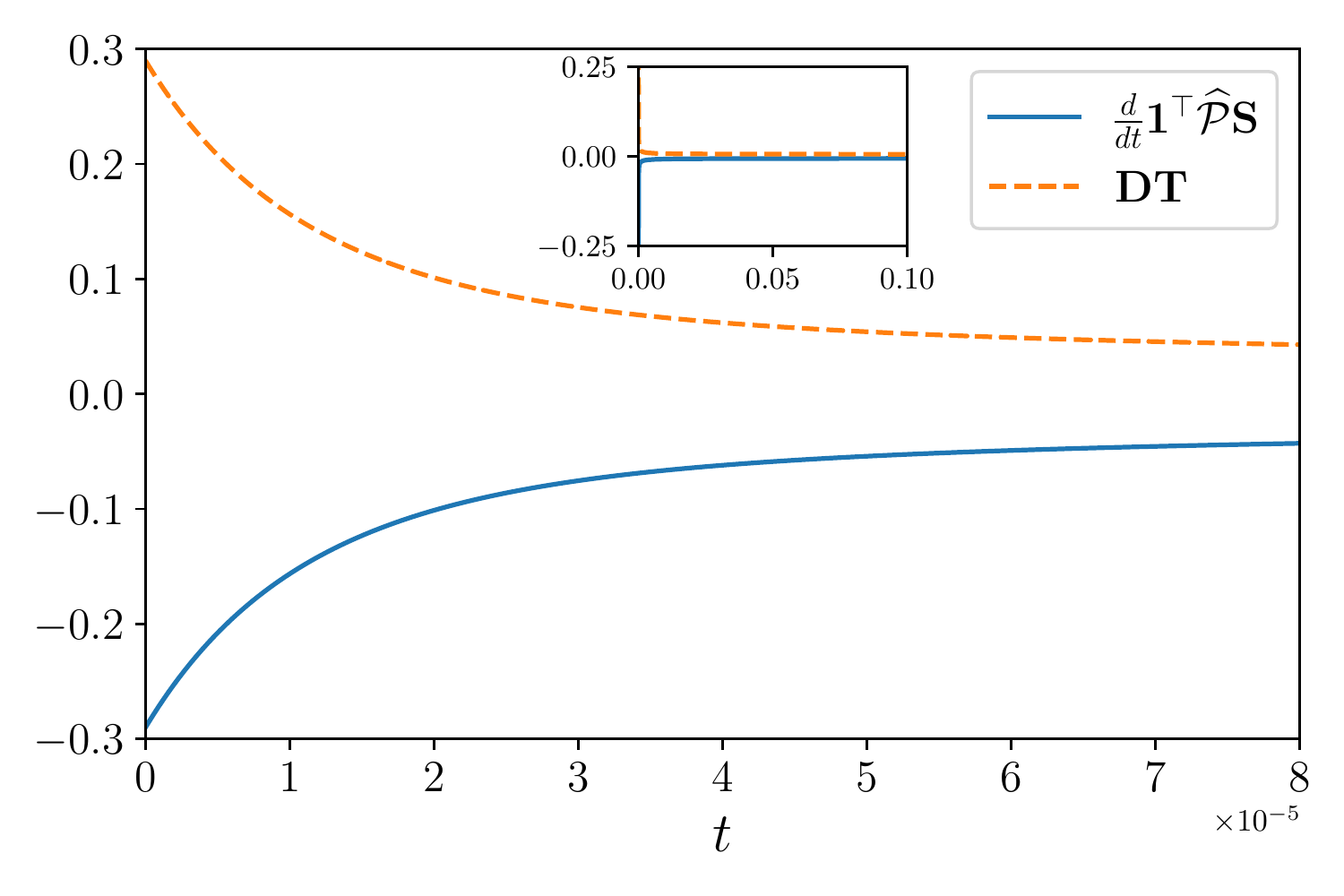}
        \vspace{0.25cm}
    \caption{Evolution of the time derivative of the entropy function and
    the dissipation term.}
    \label{fig:dsdt-DT}
\end{subfigure}
\begin{subfigure}{.49\textwidth}
    \centering
    \includegraphics[width=\columnwidth,trim = 0 0 -5 0,clip]
        {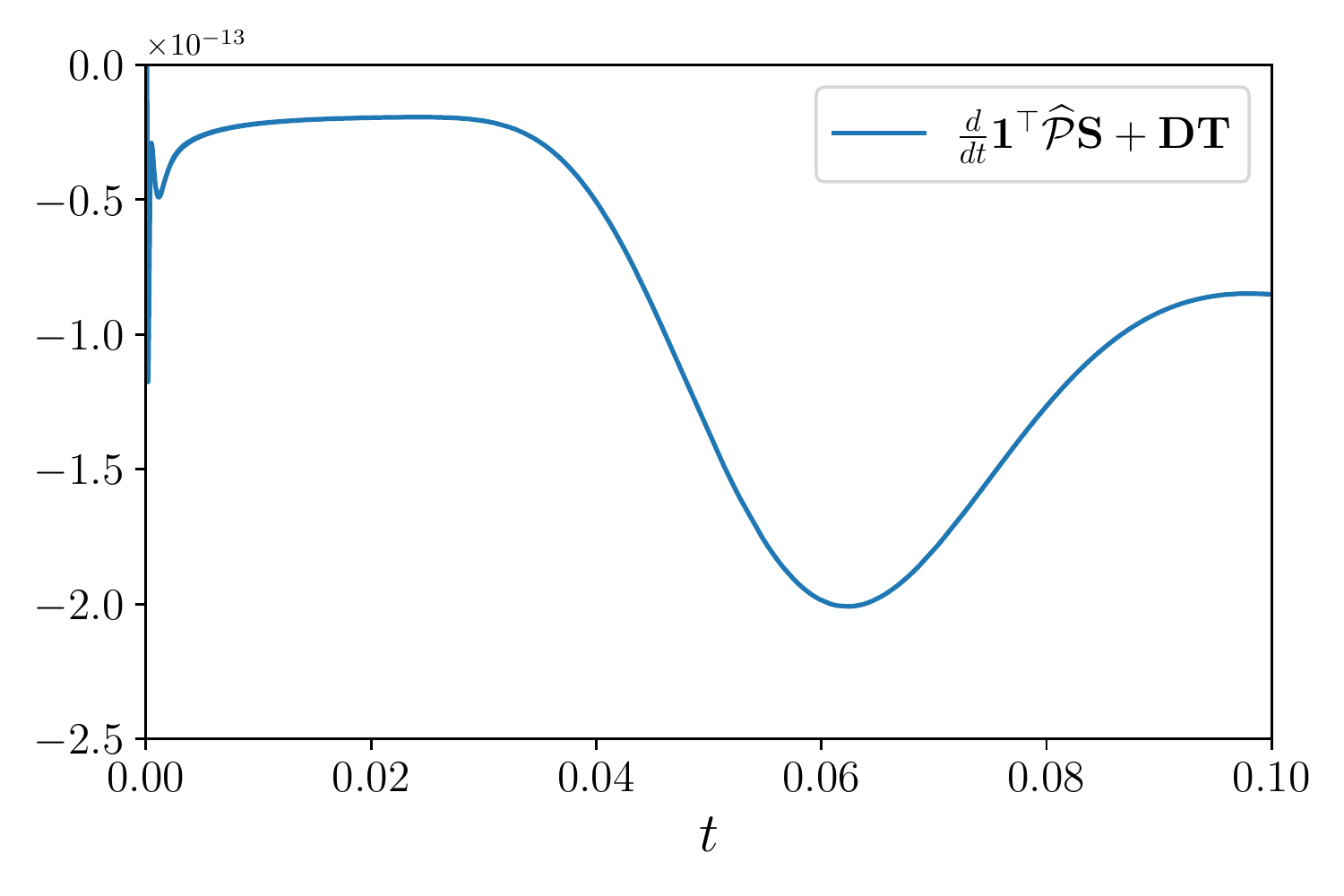}
    \caption{Evolution of the sum of the time derivative of the entropy function and
    the dissipation term.}
    \label{fig:dsdtpDT}
\end{subfigure}
\caption{Verification of the entropy conservation for the spinning sphere enclosed in a cubical box; 
    Eulerian model with $\text{Re}=1$, and $\text{Ma}=0.05$.}
\end{figure}


To verify the entropy conservation property of the new boundary conditions
without IP term, 
we simulate a spinning sphere enclosed in a cubic domain. The
sphere rotates at a constant angular velocity around a unit vector given by
$\hat{a_r}=\frac{a_r}{\|a_r\|}$, where $a_r=(1,1,1)^\top$. 
The domain size is $2\times 2\times 2$ with a sphere of diameter $D=
0.6$ located at the center of it. Solid wall boundary conditions are imposed on
the sphere surface and all six faces of the cubic box.
The mesh is composed of $4,374$ hexahedral elements.
The sphere surface is discretized with quadratic boundary elements. A solution
polynomial degree $p=5$ is used.
We run the Eulerian model with $\text{Re}=1$, and $\text{Ma}=0.05$.
In Figure \ref{fig:spinning-sphere}, the velocity vector field near the surface
of the sphere is shown. The sphere is also colored based on the module of its
pointwise velocity vector.

Figure \ref{fig:dsdt-DT} shows the time
derivative of the entropy function of the semidiscretization of the Eulerian model
\eqref{eq:eulerian-disc},
$\frac{d}{d t}\mathbf{1}^\top\widehat{\mathcal{P}}\mathbfcal{S}$,
and the  dissipation term, $\mathbf{DT}$,  
as a function of time, $t$ (see Equation \eqref{eq:time-derivative-of-entropy}). 
As shown in Figure \ref{fig:dsdtpDT}, the two terms cancel out, 
up to machine precision. Therefore, the procedure proposed to impose the 
boundary conditions is entropy conservative if the IP term in \eqref{eq:conservative-penalty-terms}  
is set to zero. This simulation is a strong numerical verification of what is proven in
Section \ref{subsec:ss_semi}.


\subsection{Laminar flow around a cylinder $\text{Re}=40$}


\begin{figure}[htbp!]
    \centering
    \begin{overpic}[width=0.55\textwidth,tics=5]{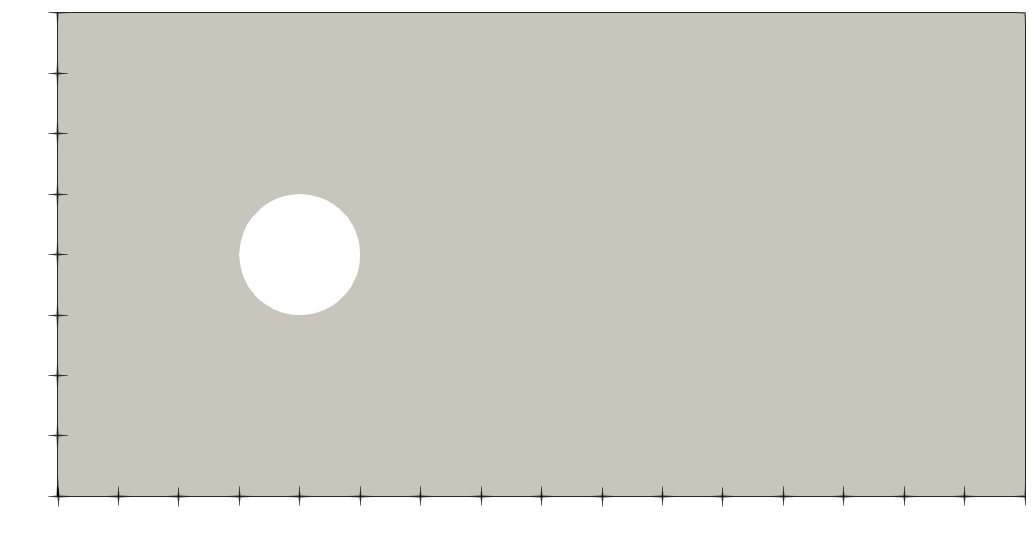}
        \put (50,1) {$x_1$}
        \put (0,27.5) {$x_2$}
        \put (40,54) {\rotatebox{0}{Inviscid slip wall}}

        \linethickness{0.5pt}
        \put (-9,25) {\rotatebox{90}{Inlet}}
        \put (114,22) {\rotatebox{90}{Outlet}}
	    \put (27.5,29) {$D$}
        \put(29,28){\color{black}\vector(1,0){4.5}}
        \put(29,28){\color{black}\vector(-1,0){4.5}}
	    \put (65,-3) {\rotatebox{0}{$30D$}}
        \put(74,-1.75){\color{black}\vector(1,0){24}}
        \put(64,-1.75){\color{black}\vector(-1,0){34.5}}
	    \put (15,-3) {\rotatebox{0}{$10D$}}
        \put(24,-1.75){\color{black}\vector(1,0){4.5}}
        \put(14,-1.75){\color{black}\vector(-1,0){6}}
	    \put (103,22) {\rotatebox{90}{$20D$}}
        \put(105,30.5){\color{black}\vector(0,1){20}}
        \put(105,21){\color{black}\vector(0,-1){15}}

    \end{overpic}
    \vspace{0.3cm}
    \caption{Cylinder domain with $D=1$.}
    \label{fig:vis-cyl-domain}
\end{figure}

\begin{figure}[]
    \centering
    \definecolor{cnscolor}{HTML}{7a0000}
    \begin{overpic}[width=0.95\textwidth,tics=5]{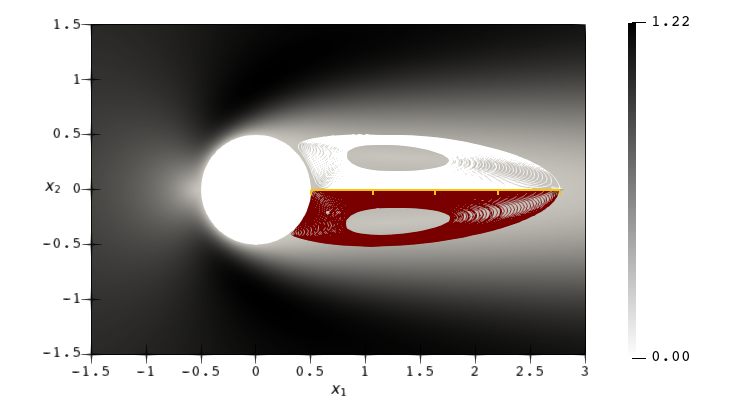}
        \put (48.5,33.5) {\rotatebox{0}{\small\color{black}Eulerian}}
        \put (51,24.75) {\rotatebox{0}{\small\color{cnscolor}CNS}}
    \end{overpic}
    \caption{Wake region of the flow past a cylinder at $\text{Re}=40$ and $\text{Ma}=0.07$. 
    The background is colored by the magnitude of the pointwise
	velocity vector.}
    \label{fig:vis-wake-40}
\end{figure}

\begin{table}
    \begin{center}
        \begin{tabular}{l|ll}
            & $L_\text{bubble}$ & $C_D$\\
            \hline\\
            \citet{park1998numerical} & $2.25$ & $1.51$\\
            \citet{ye1999accurate} & $2.27$ & $1.52$\\
            \citet{fornberg1980numerical} & $2.24$ & $2.5$\\
            \citet{dennis1970numerical} & $2.35$ & $1.52$\\
            \citet{calhoun2002cartesian} & $2.18$ & $1.62$\\
            \citet{russell2003cartesian} & $2.29$ & $1.6$\\
            SSDC CNS & $2.254$ & $1.608$\\
            SSDC Eulerian & $2.256$ & $1.609$
        \end{tabular}
    \end{center}
    \caption{Bubble length, $L_\text{bubble}$, and drag coefficient,
      $C_D$.}
    \label{tab:wak-cd-Re-40}
\end{table}

\begin{figure}[h]
    \centering
    \centering
    \begin{subfigure}{0.49\textwidth}\centering 
        \includegraphics[width=1.00\columnwidth,trim = 0 0 -5 0,clip]
        {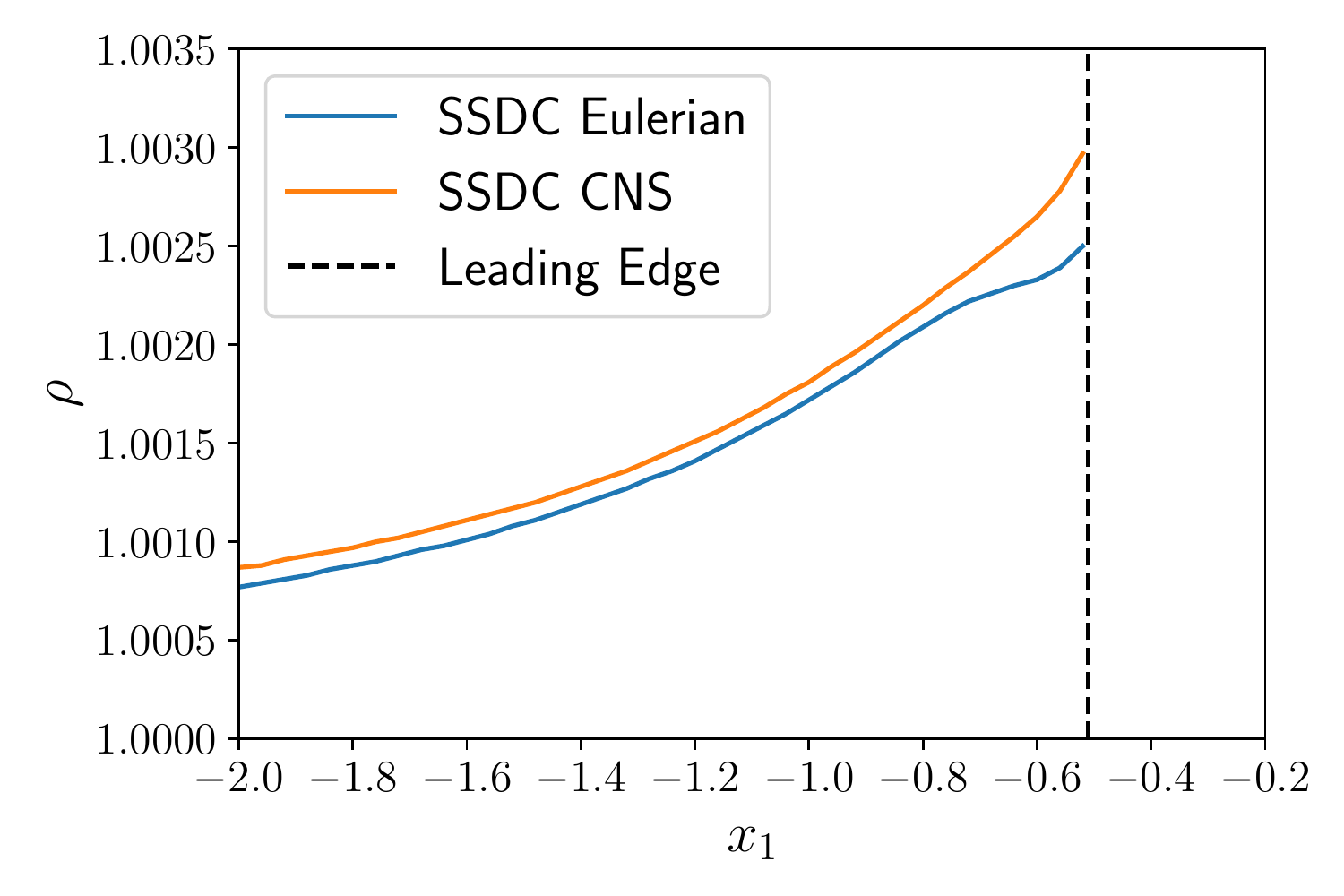}
        \caption{Leading edge.}
        \label{subfig:cyl_leading}
    \end{subfigure}
    \vspace{0.5cm}
    \begin{subfigure}{0.49\textwidth}\centering 
        \includegraphics[width=1.00\columnwidth,trim = 0 0 -5 0,clip]
        {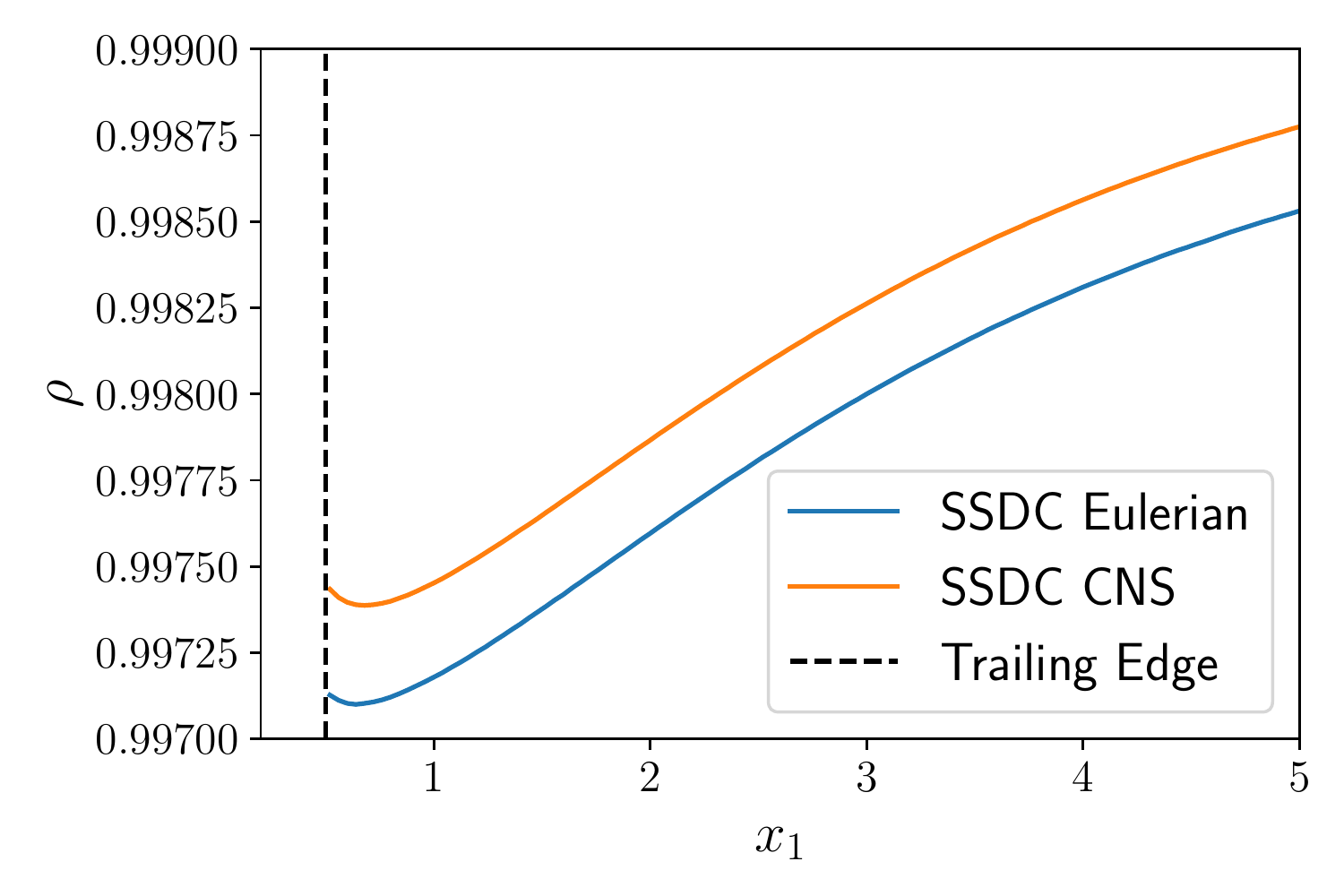}
        \caption{Trailing edge.}
\label{subfig:cyl_trailing}
    \end{subfigure}
    \caption{The density profile for the flow past a cylinder at $\text{Re}=40$ and
    $\text{Ma}=0.07$.}
    \label{fig:vis-cyl-40-mean-density}
\end{figure}

\begin{figure}[]
    \centering
    \begin{overpic}[width=\columnwidth,trim = 0 0 -5 0,clip]
        {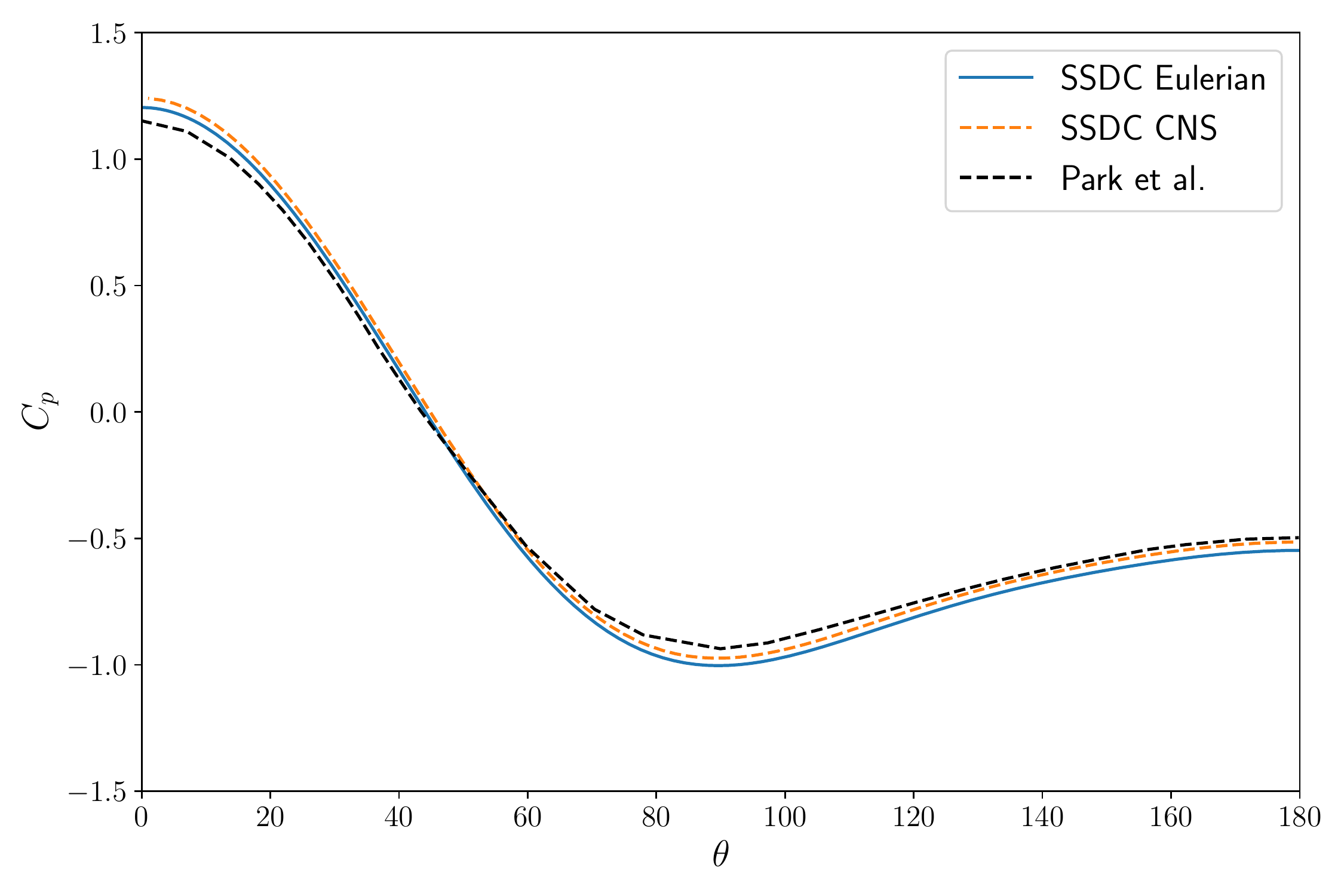}
        \put (89.75,52.25) {\cite{park1998numerical}}
    \end{overpic}        
    \caption{Wall pressure coefficient for the flow past a cylinder at $\text{Re}=40$
    and $\text{Ma} = 0.07$. The reference data
    are obtained from \cite{park1998numerical}.}
    \label{fig:vis-cyl-40-cp}
\end{figure}

\begin{figure}[]
    \centering
    \begin{overpic}[width=\columnwidth,trim = 0 0 -5 0,clip]
        {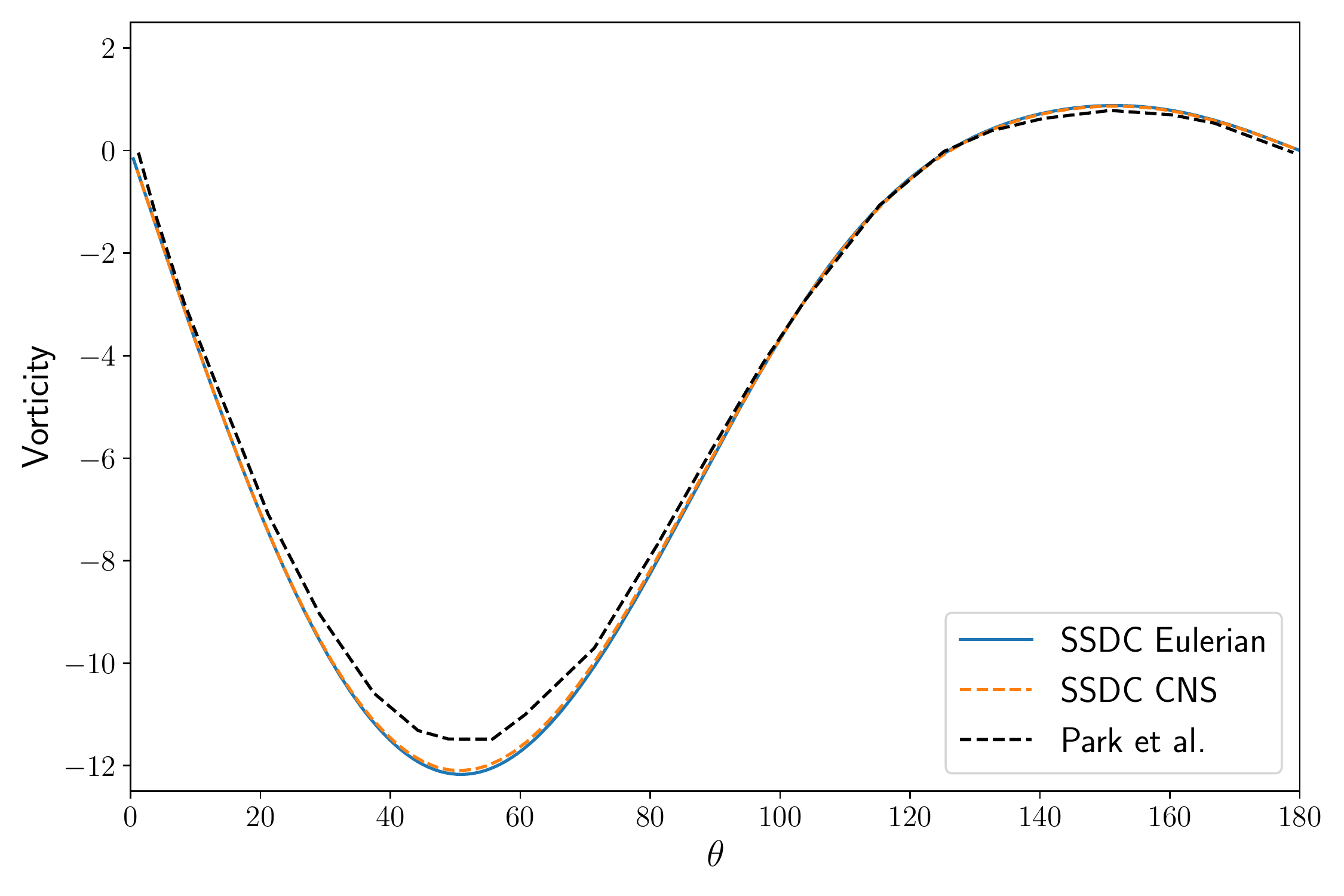}
        \put (89.75,10.75) {\cite{park1998numerical}}
    \end{overpic}
    \caption{Vorticity at the wal for the flow past a cylinder at $\text{Re}=40$ and $\text{Ma}=0.07$. 
    The reference data
    was obtained from \cite{park1998numerical}.}
    \label{fig:vis-cyl-40-vort}
\end{figure}


In this section, we present the two-dimensional flow around a cylinder of diameter $D = 1$ at
$\text{Re}=40$, and $\text{Ma}=0.07$. 
Figure \ref{fig:vis-cyl-domain} shows the computational 
domain and summarizes the boundary conditions.
The initial condition is a uniform flow in the $x_1$ direction.
The case is run until convergence to a steady-state. Specifically, we stop the simulation when
the residual reaches $10^{-14}$ and plateau around that value. 
Entropy stable adiabatic no-slip wall boundary conditions are enforced on the cylinder surface.
Far-field boundary conditions are used for the remaining boundaries.
The mesh is
composed of $1,140$ elements with quadratic edges for representing the surface of the
cylinder. We use a solution polynomial degree $p =7$. Therefore, the total
number of degrees of freedom is $72,960$. The solution computed with this setup is
denoted here as a ``numerically converged solution" in the sense that if we increase the order of
accuracy of the method, $p$, or refine the mesh, the difference between two consecutive numerical solutions for all the
primitive variables is machine precision.

First, we compute the length of the recirculation bubble in the wake region. 
Figure 
\ref{fig:vis-wake-40} shows the streamlines of the circulation bubble downstream
of the sphere. The background is the magnitude of the pointwise velocity vector. The top part of the plot
corresponds to the solution computed with the Eulerian model, whereas the bottom
part is the solution obtained using the CNS model. Qualitatively, the two
solutions look identical. Furthermore, both models show a wake length of 
$2.25$.  The first column in Table \ref{tab:wak-cd-Re-40} compares the length of the 
bubble with several results published in the literature. The agreement is good.
The second column of Table \ref{tab:wak-cd-Re-40} compares the drag coefficient,
$C_D$. The values obtained with both models are very close to each other and agree well with
that reported by \citet{calhoun2002cartesian} and \citet{russell2003cartesian}. 

Next, we present some measurements in the boundary layer of the cylinder wall. 
Figure \ref{fig:vis-cyl-40-mean-density} shows
the density profile for both models, where for the Eulerian model, we enforce a
zero normal density gradient at the wall. Furthermore, for both models, we use
adiabatic solid wall boundary conditions. Because in our simulations, we use a 
non-dimensional formalism, the difference between the two solutions is not negligible, 
especially near the leading edge; see Figure \ref{subfig:cyl_leading}.
The difference observed is accumulated at the leading edge and remains consistent along
with the tailwind, as shown in Figure \ref{subfig:cyl_trailing}.

Figure \ref{fig:vis-cyl-40-cp} shows the wall pressure coefficient, $C_p$.
The curves obtained with the Eulerian model and the CNS model are very close to each other and
follow relatively well the DNS results published by \citet{park1998numerical}.
A similar comparison is made in Figure \ref{fig:vis-cyl-40-vort} for the vorticity at the wall. 
Again, the curves obtained with the Eulerian model and the CNS model are very close to each other
and follow well the DNS results except around $50^{\circ}$ where they both undershoot the 
reference results.


\subsection{Blast-wave}

\begin{figure}
    \centering
    \includegraphics[scale=0.5]{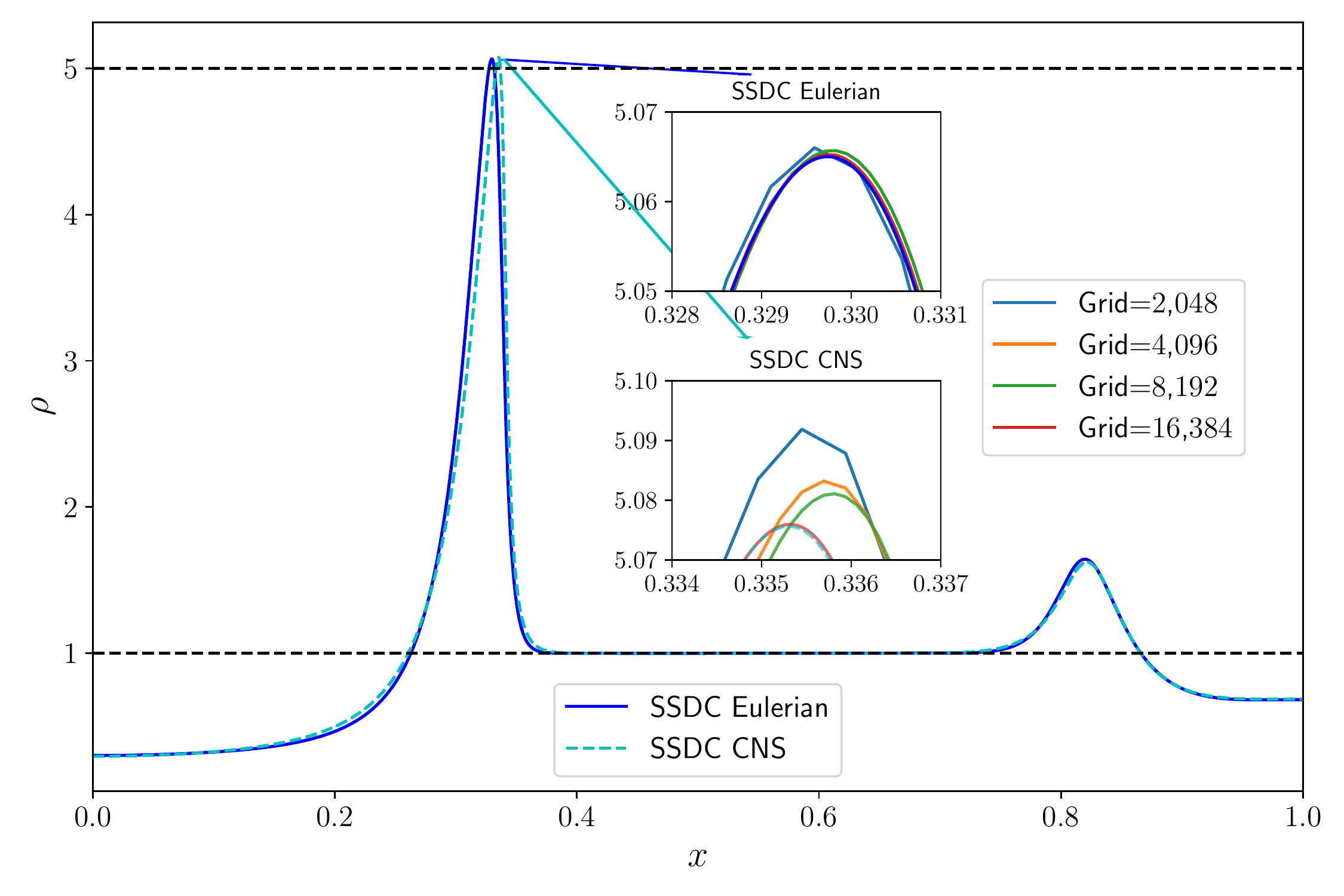}
    \caption{Density distribution for the blastwave at time $t=0.01$ computed
    with the Eulerian and the CNS models. The base figure uses a grid of $32,768$ 
    elements.}
    \label{fig:blastwave-density-comparison}
\end{figure}


\citet{svard2018new} presents the one-dimensional blastwave as an example where the 
implementation of the CNS model converges to an unphysical solution.
Here, we investigate the same test case. We use $\text{Re}=10$ and $\text{Ma}=0.07$, and 
the same initial conditions as in \cite{svard2018new}.
All the simulations are performed with $p=1$. We present the results for
a sequence of five uniform nested grids, with respectively $2,048$, $4,096$, 
$8,192$, $16,384$, and $32,768$ elements. The solutions obtained with the finest grid are
denoted here as ``numerically converged solutions". Solid wall adiabatic no-slip boundary
conditions with zero density gradient for the Eulerian model are imposed at the
left and right boundaries of the domain.

\citet{svard2018new} observes that since the density flux is
not bounded, the solution can converge to a state where the wall dissipates
density. This is a non-physical solution and is computed with a
non-entropy stable implementation of the CNS model 
\cite{svard2018new}. Our discretization of the Eulerian and 
CNS models are entropy stable.
As we will see shortly, we are unable to 
reproduce the ``lagging" effect observed in 
\cite{svard2018new}. 

The main plot in Figure \ref{fig:blastwave-density-comparison} compares the 
density profiles computed with the Eulerian and the CNS models on the finest grid with 
$8,192$ elements. We can easily observe that
the two curves do not lie on top of each other. Specifically, we notice
a clear difference in the region near the maximum peak, whose position along the 
$x$ axis also appears to be influenced by the model. Furthermore, differently
from \cite{svard2018new}, we do not notice any difference at the left and right 
boundaries of the domain between the solution computed with the Eulerian and the
CNS models.
In Figure \ref{fig:blastwave-density-comparison}, we also
report two zoom
views of the maximum peaks. There, we plot the density profiles
obtained with the five nested grids. We can observe how the
solutions approach the ``numerically converged solution". 
In particular, it appears that the solution obtained with the Eulerian
model reaches the ``converged state" faster than the solution obtained with the CNS model.
Furthermore, we notice that the value of the maximum peak obtained with the Eulerian model is slightly smaller
than the same value computed by the CNS model.


\subsection{Supersonic flow around a cylinder}


\begin{figure}[]
    \centering
    \begin{overpic}[width=0.55\textwidth,tics=5]{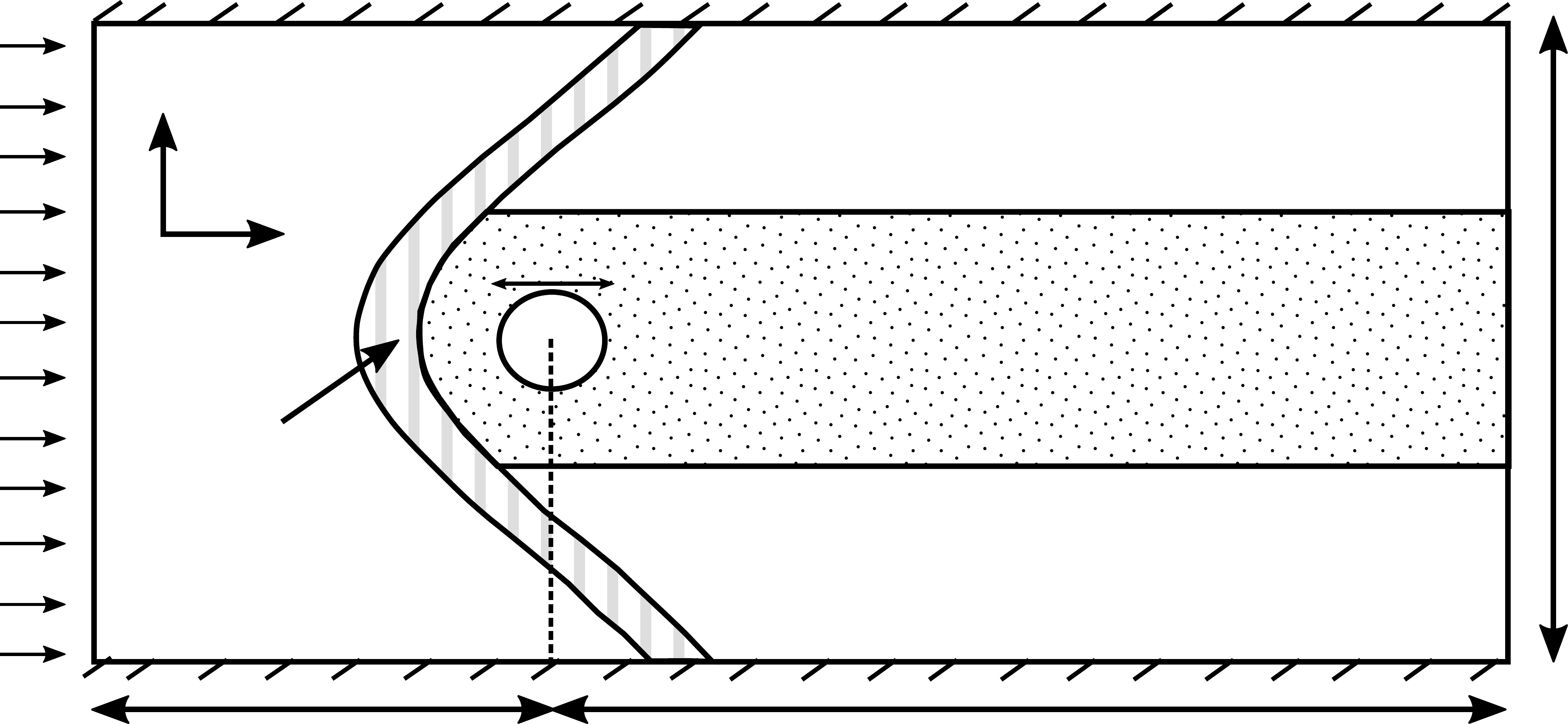}
        \put (19,32) {$x_1$}
        \put (9,40) {$x_2$}
        \put (40,18.5) {\rotatebox{90}{\small{Cylinder}}}
        \put (34,29) {$\mathrm{D}$}
        \put (-4.75,11.25) {\rotatebox{90}{${\mathcal{U}}=\left(\mathcal{U}_{\infty,~0}\right)$}}
        \put (100,22) {\rotatebox{90}{$6\mathrm{D}$}}
        \put (68,47) {\rotatebox{0}{Inviscid slip wall}}
        
        \put (65,-3) {\rotatebox{0}{$30\mathrm{D}$}}
        \put (18,-3) {\rotatebox{0}{$10\mathrm{D}$}}
        
        \put (85,24) {$p=4$}
        \put (85,40) {$p=3$}
        \put (85,6) {$p=3$}
        \put (10,7) {$p=2$}
        \put (10,7) {$p=2$}
        \put (12,16) {$p=2$}
    \end{overpic}
    \vspace{0.3cm}
    \caption{Computational domain, $hp-$nonconforming grid, and solution polynomial degree 
    distribution, $p$, for the simulation of the supersonic flow past a circular cylinder.
    The underlying figure was adapted from \cite{parsani2020high}.}
    \label{fig:domain-p-super-cyl}
\end{figure}

\begin{figure}[]
    \centering
    \begin{subfigure}{0.85\textwidth}\centering 
        \includegraphics[width=1.00\columnwidth,trim = 0 0 -5 0,clip]
        {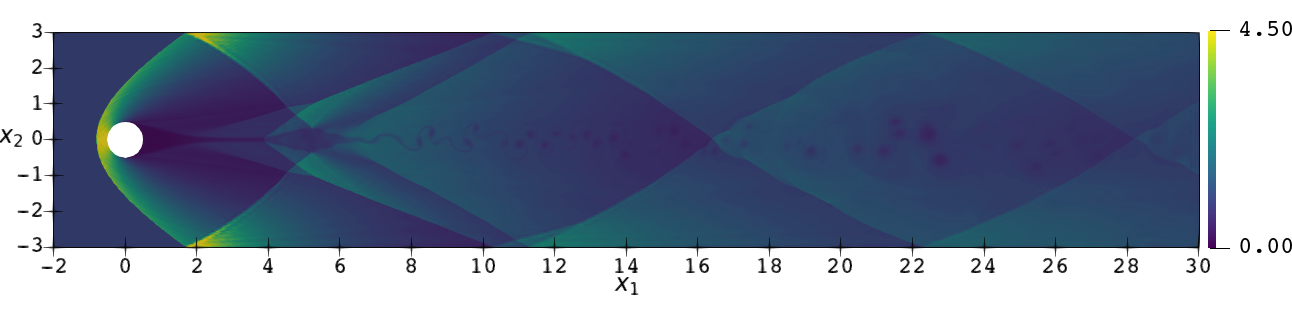}
        \caption{Density, $\rho$.}
    \end{subfigure}\\
    \vspace{0.5cm}
    \begin{subfigure}{0.85\textwidth}\centering 
        \includegraphics[width=1.00\columnwidth,trim = 0 0 -5 0,clip]
        {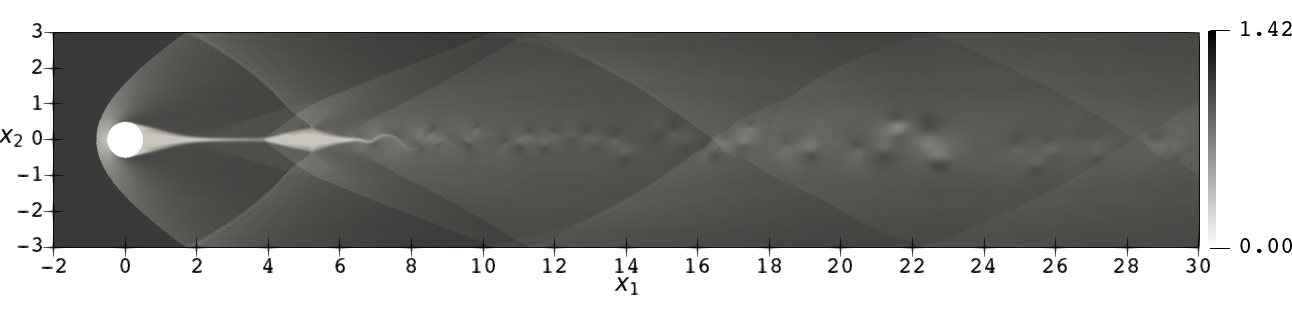}
        \caption{Velocity component in the $x_{1}$ direction, $\mathcal{U}_1$.}
        \label{fig:sup-cyl-velocity-x}
            \end{subfigure}
    \caption{Supersonic flow past a cylinder enclosed between two plates at $\text{Re}=10,000$ and $\text{Ma} = 3.5$.}
    \label{fig:sup-cyl-fields}
\end{figure}

\begin{figure}[]
    \centering
    \includegraphics[width=0.95\columnwidth,trim = 0 0 -5 0,clip]
            {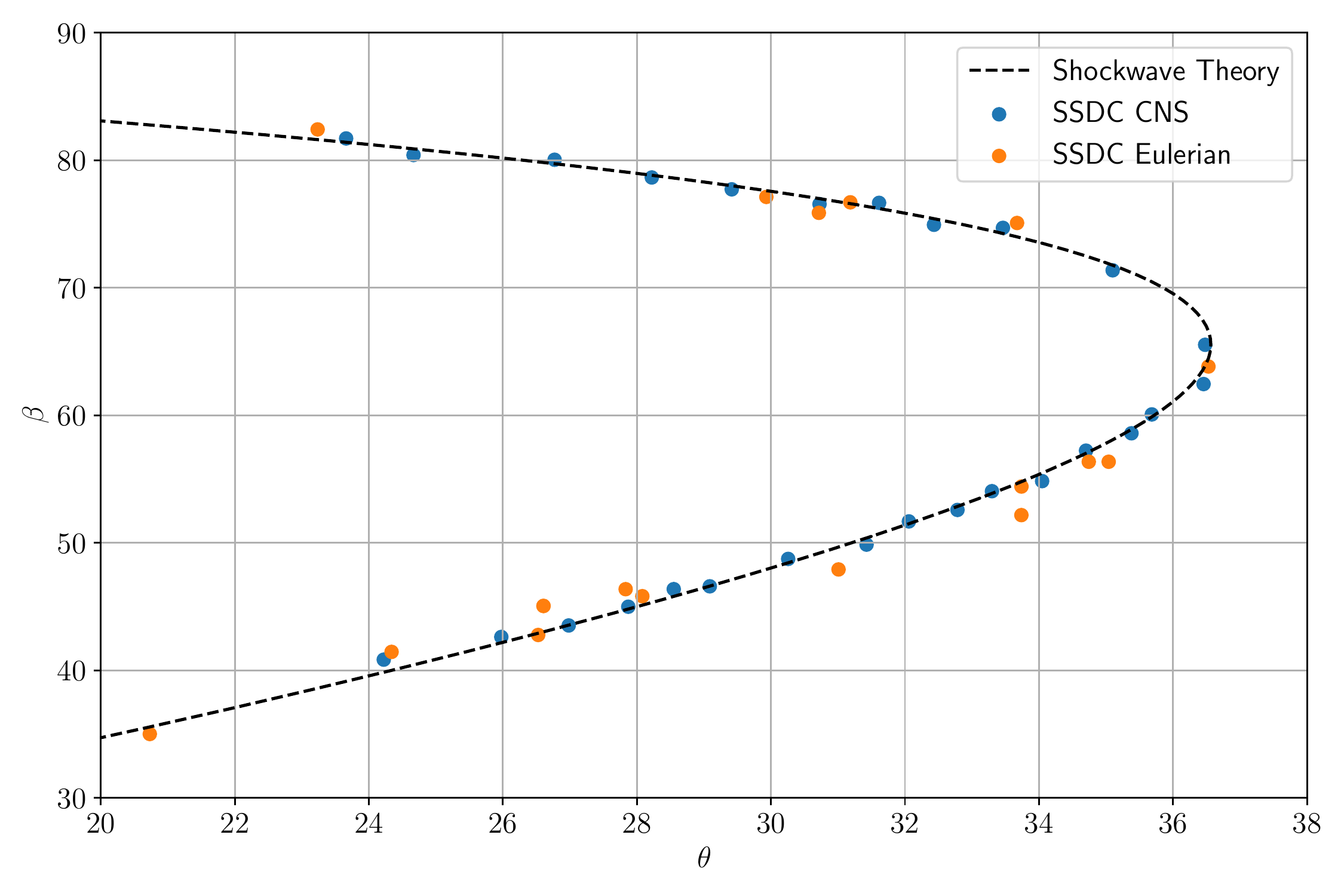}
    \caption{$\theta$-$\beta-\text{Ma}$ plot for the flow past a cylinder at $\text{Re}=10,000$,
	and $\text{Ma} = 3.5$.}
    \label{fig:sup-cyl-shock-theory}
\end{figure}

\begin{figure}[]
    \centering
    \begin{overpic}[width=0.95\textwidth,tics=5]{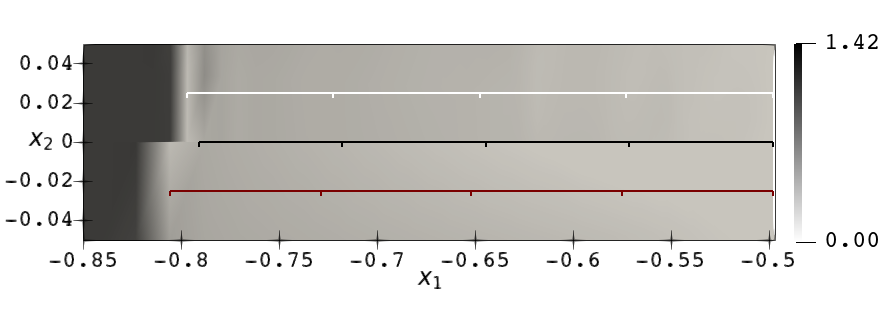}
    \definecolor{cnscolor}{HTML}{7a0000}
    \put (39,22) {\rotatebox{0}{\color{white}SSDC Eulerian $\Delta = 0.299$}}
    \put (39,16.5) {\rotatebox{0}{\color{black}Analytic $\Delta = 0.293$}}
    \put (39,11) {\rotatebox{0}{\color{cnscolor}SSDC CNS $\Delta = 0.308$}}
    \end{overpic}
    \vspace{0.3cm}
    \caption{Normalized distance between the shock and the
    cylinder leading edge for the flow past a cylinder at $\text{Re}=10,000$, and $\text{Ma} = 3.5$.
    The background is colored by the pointwise velocity magnitude.}
    %
    \label{fig:sup-cyl-distance}
\end{figure}


In this last test case, we present some results for the flow 
past a two-dimensional cylinder enclosed
between two solid walls \cite{boukharfane2018combined}. The similarity parameters are $\text{Re}=10,000$ and $\text{Ma} = 3.5$. 
The initial condition is a uniform 
flow with unit density, $\rho$, unit temperature, $T$, and only a non-zero 
unit velocity component in the $x_1$ direction, $\mathcal{U}_1$.
The flow is computed with a non-uniform distribution of the solution 
polynomial degree. Therefore, the solution is computed using the entropy stable $p$-nonconforming interface
technology \cite{FERNANDEZ2020104631}.
The solution polynomial degree distribution is shown in Figure \ref{fig:domain-p-super-cyl}, and
it is the same setup used in \cite{parsani2020high}.
Entropy stable adiabatic no-slip wall boundary conditions are enforced on the cylinder surface.
Inviscid (slip) wall boundary conditions are imposed on the top and bottom horizontal boundaries,
whereas far-field boundary conditions are used for the (left) inlet and the (right) outlet boundaries.
The mesh consists of $5,067$ quadrangles.
The solution computed with this setup is
denoted here as a ``numerically converged solution" in the sense that if we increase the order of
accuracy of the method, $p$, or we refine the mesh, the difference in the
solution for all the
primitive variables is machine precision.
Figure \ref{fig:sup-cyl-fields} shows the contour plot of the density, $\rho$, and the 
velocity component in the $x_1$ direction, $\mathcal{U}_1$, of the developed 
solution.

A quantitative analysis of the numerical results can be performed using the oblique shock wave 
theory \cite{shapiro1953dynamics}. 
At an oblique shock, the flow changes direction, and there are three directions of interest: the 
upstream and downstream flow directions and the shock wave direction itself.
The most useful relation of oblique shock wave theory is the one providing the deflection 
angle, $\theta$, as a function of the shock wave angle, $\beta$, and local Mach 
number, $\text{Ma}$ \cite{liepmann2001elements}:
\begin{equation}
    \tan (\theta)=2\cot (\beta)
    \left[\frac{\text{Ma}^2\sin^2(\beta)-1}
    {\text{Ma}^2(\gamma+\cos(2\beta))+2}\right].
\end{equation}
The pair $\theta-\beta$ obtained by postprocessing the solutions computed with the Eulerian and CNS models are shown in
Figure \ref{fig:sup-cyl-shock-theory}. We can see that the results for both models are in good agreement with the theoretical curve.
Finally, we report the normalized distance between the shock and the leading edge of the cylinder, as shown in Figure \ref{fig:sup-cyl-distance}. 
The value of the latter quantity is $\Delta=0.299$ and $\Delta=0.308$ 
for the Eulerian and CNS models, respectively. These values are in good agreement with the analytical value, which reads
$\Delta=0.293$ \cite{boiron2009high}.


\section{Conclusion}
\label{sec:conclusions}

Guided by the entropy stability analysis, we have derived solid wall boundary conditions 
that preserve the entropy stability of the Eulerian model for viscous compressible fluids 
proposed by \citet{svard2018new}. In that context, using summation-by-parts operators and the simultaneous-approximation
term technique, we have constructed entropy conservative and entropy stable solid wall boundary conditions
for the semidiscretized system, which mimics the continuous entropy analysis results.
The proposed boundary conditions have been validated in terms of accuracy,
entropy conservation, and entropy stability for a set of test cases of increasing complexity.
The numerical results obtained with the Eulerian model have been compared with the
solutions computed using the classic compressible Navier--Stokes equations also
discretized with an entropy stable methodology constructed using summation-by-parts
operators. Differences and similarities have then been highlighted. This study is a 
first attempt to understand better the effects of the viscous flux term introduced
in the conservation of mass equation.   


\par\addvspace{17pt}\small\rmfamily
\trivlist\if!{Data Availability}!\item[]\else
\item[\hskip\labelsep
{\bfseries{Data Availability}}]\fi
The datasets generated during and/or analysed during the current study are 
available in the Zenodo repository, \url{http://doi.org/10.5281/zenodo.5041436}
\endtrivlist\addvspace{6pt}

\begin{acknowledgements}
    The research reported in this paper was funded by King Abdullah 
    University of Science and Technology. We are thankful for the 
    computing resources of the Supercomputing Laboratory and the 
    Extreme Computing Research Center at King Abdullah University of 
    Science and Technology.
\end{acknowledgements}

%
%

\bibliographystyle{spbasic}      
\bibliography{ebc}   


\appendix

\section{Derivation of the time derivative of the entropy}\label{sec:derivation-of-time-derivative-of-entropy}

In this section, we provide the derivation of equation \eqref{eq:time-derivative-of-entropy}.
To obtain the expression of the time derivative of the entropy function,
$\mathbfcal{S}$, we multiply \eqref{eq:conservative-penalty}
and \eqref{eq:LDG-penalty} by $\mathbf{w}^\top\mathcal{P}$ and 
$\left(\left[\nu\frac{dq}{dw}\right]\Theta_{x_j}\right)^\top\mathcal{P}$,
respectively. This step yields
\begin{subequations}
    \begin{align}
        \begin{split}
            &\frac{d}{d t}\mathbf{1}^\top\widehat{\mathcal{P}}\mathbfcal{S}
            +\mathbf{1}^\top\widehat{\mathcal{P}}_{x_j,x_k}
            \widehat{\mathcal{B}}_{x_i}\mathbf{F}_{x_i}
            -\mathbf{w}^\top\mathcal{P}\mathcal{D}_{x_i}
            \left[\nu\frac{dq}{dw}\right]\mathbf{\Theta}_{x_j}\\
            &=\mathbf{w}^\top\mathcal{P}_{x_j,x_k}
            \left(\mathbf{g}^{(b),q}_{x_i}
            +\mathbf{g}^{(In),q}_{x_i}\right),
        \end{split}
        \label{eq:wt-conservative-penalty}\\
        \begin{split}
            &\left(\left[\nu\frac{dq}{dw}\right]\mathbf{\Theta}_{x_j}\right)^\top\mathcal{P}
            \mathbf{\Theta}_{x_i}
            -\left(\left[\nu\frac{dq}{dw}\right]\mathbf{\Theta}_{x_j}\right)^\top\mathcal{P}
            \mathcal{D}_{x_i}\mathbf{w}\\
            &=\left(\left[\nu\frac{dq}{dw}\right]\mathbf{\Theta}_{x_j}\right)^\top
            \mathcal{P}_{x_j,x_k}\left(\mathbf{g}^{(b),\Theta}_{x_i}
            +\mathbf{g}^{(In),\Theta}_{x_i}\right).
        \end{split}
        \label{eq:wt-LDG-penalty}
    \end{align}
\end{subequations}
Using the properties of SBP operators, we can write
\begin{equation}
    \mathbf{w}^\top\mathcal{P}\mathcal{D}_{x_i}
    \left[\nu\frac{dq}{dw}\right]\mathbf{\Theta}_{x_j}
    =\mathbf{w}^\top\mathcal{P}\mathcal{P}^{-1}_{x_i}\mathcal{B}_{x_i}
    \left[\nu\frac{dq}{dw}\right]\mathbf{\Theta}_{x_j}
    -\mathbf{w}^\top\mathcal{P}\mathcal{P}^{-1}_{x_i}\mathcal{Q}^\top_{x_i}
    \left[\nu\frac{dq}{dw}\right]\mathbf{\Theta}_{x_j}.
\end{equation}
Since $\mathcal{P}$ is diagonal, we have
\begin{equation}
    \mathcal{P}\mathcal{P}^{-1}_{x_i}\mathcal{Q}^\top_{x_i}
    =\mathcal{P}\mathcal{Q}^\top_{x_i}\mathcal{P}^{-1}_{x_i}
    =\mathcal{P}\mathcal{D}^\top_{x_i}
    =\mathcal{D}^\top_{x_i}\mathcal{P}.
\end{equation}
Thus, Equation \eqref{eq:wt-conservative-penalty} is recast in the following
form:
\begin{align}
    \label{eq:time-derivative-of-entropy-setup}
    \begin{split}
        &\frac{d}{d t}\mathbf{1}^\top\widehat{\mathcal{P}}\mathbfcal{S}
        +\mathbf{1}^\top\widehat{\mathcal{P}}_{x_j,x_k}
        \widehat{\mathcal{B}}_{x_i}\mathbf{F}_{x_i}
        +\left(\mathcal{D}_{x_i}\mathbf{w}\right)^\top\mathcal{P}
        \left[\nu\frac{dq}{dw}\right]
        \mathbf{\Theta}_{x_j}\\
        &=\mathbf{w}^\top\mathcal{P}_{x_j,x_k}\mathcal{B}_{x_i}
        \left[\nu\frac{dq}{dw}\right]
        \mathbf{\Theta}_{x_j}
        +\mathbf{w}^\top\mathcal{P}_{x_j,x_k}
        \left(\mathbf{g}^{(b),q}_{x_i}
        +\mathbf{g}^{(In),q}_{x_i}\right).
    \end{split}
\end{align}
Additionally, because $\left[\nu\frac{dq}{dw}\right]$
is SPD, we have
\begin{equation}
    \left(\left[\nu\frac{dq}{dw}\right]\mathbf{\Theta}_{x_j}\right)^\top\mathcal{P}
    \mathbf{\Theta}_{x_i}=
    \left(\left[\nu\frac{dq}{dw}\right]^{\frac12}\mathbf{\Theta}_{x_j}\right)^\top\mathcal{P}
    \left(\left[\nu\frac{dq}{dw}\right]^{\frac12}\mathbf{\Theta}_{x_i}\right)
    =\left\|\left[\nu\frac{dq}{dw}\right]^{\frac12}\mathbf{\Theta}_{x_i}\right\|^2_\mathcal{P}.
\end{equation}
Now, substituting \eqref{eq:wt-LDG-penalty} into \eqref{eq:time-derivative-of-entropy-setup}
gives the following expression
\begin{align}
\label{eq:tobesimple}
    \begin{split}
        &\frac{d}{d t}\mathbf{1}^\top\widehat{\mathcal{P}}\mathbfcal{S}
        +\left\|\left[\nu\frac{dq}{dw}\right]^{\frac12}\mathbf{\Theta}_{x_i}\right\|^2_\mathcal{P}\\
        &=-\mathbf{1}^\top\widehat{\mathcal{P}}_{x_j,x_k}
        \widehat{\mathcal{B}}_{x_i}\mathbf{F}_{x_i}\\
        &+\mathbf{w}^\top\mathcal{P}_{x_j,x_k}\mathcal{B}_{x_i}
        \left[\nu\frac{dq}{dw}\right]\mathbf{\Theta}_{x_j}\\
        &+\mathbf{w}^\top\mathcal{P}_{x_j,x_k}
        \left(\mathbf{g}^{(b),q}_{x_i}
        +\mathbf{g}^{(In),q}_{x_i}\right)\\
        &+\left(\left[\nu\frac{dq}{dw}\right]\mathbf{\Theta}_{x_j}\right)^\top
        \mathcal{P}_{x_j,x_k}\left(\mathbf{g}^{(b),\Theta}_{x_i}
        +\mathbf{g}^{(In),\Theta}_{x_i}\right).
    \end{split}
\end{align}
By defining
\begin{equation}
    \mathbf{DT}=\left\|\left[\nu\frac{dq}{dw}\right]^{\frac12}\mathbf{\Theta}_{x_i}\right\|^2_\mathcal{P},
\end{equation}
and
\begin{align}
    \begin{split}
        \Xi&=-\mathbf{1}^\top\widehat{\mathcal{P}}_{x_j,x_k}
        \widehat{\mathcal{B}}_{x_i}\mathbf{F}_{x_i}\\
        &+\mathbf{w}^\top\mathcal{P}_{x_j,x_k}\mathcal{B}_{x_i}
        \left[\nu\frac{dq}{dw}\right]\mathbf{\Theta}_{x_j}\\
        &+\mathbf{w}^\top\mathcal{P}_{x_j,x_k}
        \left(\mathbf{g}^{(b),q}_{x_i}
        +\mathbf{g}^{(In),q}_{x_i}\right)\\
        &+\left(\left[\nu\frac{dq}{dw}\right]\Theta_{x_j}\right)^\top
        \mathcal{P}_{x_j,x_k}\left(\mathbf{g}^{(b),\Theta}_{x_i}
        +\mathbf{g}^{(In),\Theta}_{x_i}\right),
    \end{split}
\end{align}
Equation \eqref{eq:tobesimple} can be further simplified to obtain
\eqref{eq:time-derivative-of-entropy}, \textit{i.e.},
\begin{align}
    &\frac{d}{d t}\mathbf{1}^\top\widehat{\mathcal{P}}\mathbfcal{S}
    +\mathbf{DT}=\Xi.
\end{align}


\section{Change of variables matrix, $\frac{dq}{dw}(v)$}
\label{sec:dqdw}

The Jacobian of the conservative variables, $q$, 
in terms of the entropy variables, $w$, as a function of the primitive
variables, $v$, is given by
\begin{equation}
    \frac{dq}{dw}(v)=
    \begin{bmatrix}
        \frac{\rho }{R} 
        & \frac{\mathcal{U}_1 \rho }{R} 
        & \frac{\mathcal{U}_2 \rho }{R} 
        & \frac{\mathcal{U}_3 \rho }{R} 
        & \frac{dq}{dw}(v)_{1,5}
        \\
        \frac{\mathcal{U}_1 \rho }{R} 
        & \left(\mathcal{T}+\frac{\mathcal{U}_1^2}{R}\right) \rho  
        & \frac{\mathcal{U}_1 \mathcal{U}_2 \rho }{R} 
        & \frac{\mathcal{U}_1 \mathcal{U}_3 \rho }{R} 
        & \frac{dq}{dw}(v)_{2,5}
        \\
        \frac{\mathcal{U}_2 \rho }{R} 
        & \frac{\mathcal{U}_1 \mathcal{U}_2 \rho }{R} 
        & \left(\mathcal{T}+\frac{\mathcal{U}_2^2}{R}\right) \rho  
        & \frac{\mathcal{U}_2 \mathcal{U}_3 \rho }{R} 
        & \frac{dq}{dw}(v)_{3,5}
        \\
        \frac{\mathcal{U}_3 \rho }{R} 
        & \frac{\mathcal{U}_1 \mathcal{U}_3 \rho }{R} 
        & \frac{\mathcal{U}_2 \mathcal{U}_3 \rho }{R} 
        & \left(\mathcal{T}+\frac{\mathcal{U}_3^2}{R}\right) \rho  
        & \frac{dq}{dw}(v)_{4,5}
        \\
        \frac{dq}{dw}(v)_{1,5} 
        & \frac{dq}{dw}(v)_{2,5} 
        & \frac{dq}{dw}(v)_{3,5} 
        & \frac{dq}{dw}(v)_{4,5}
        & \frac{dq}{dw}(v)_{5,5}
    \end{bmatrix},
\end{equation}
where
\begin{align}
    &\frac{dq}{dw}(v)_{1,5}
    =\frac{\rho\left(-2 R\mathcal{T}+\|\mathcal{U}\|_{L^2}^2+2\mathcal{T}c_{pr}\right)}{2 R},
\end{align}

\begin{align}
    &\frac{dq}{dw}(v)_{2,5}
    =\frac{\mathcal{U}_1 \rho  \left(\|\mathcal{U}\|_{L^2}^2+2\mathcal{T}c_{pr}\right)}{2R},
\end{align}

\begin{align}
    &\frac{dq}{dw}(v)_{3,5}
    =\frac{\mathcal{U}_2 \rho  \left(\|\mathcal{U}\|_{L^2}^2+2\mathcal{T}c_{pr}\right)}{2R}, 
\end{align}

\begin{align}
    &\frac{dq}{dw}(v)_{4,5}
    =\frac{\mathcal{U}_3 \rho  \left(\|\mathcal{U}\|_{L^2}^2+2\mathcal{T}c_{pr}\right)}{2R}, 
\end{align}

\begin{align}
    &\frac{dq}{dw}(v)_{5,5}
    =\frac{\rho  \left(\left(\|\mathcal{U}\|_{L^2}^2\right)^2
    +4\mathcal{T} c_{pr} \left(-R\mathcal{T}+\|\mathcal{U}\|_{L^2}^2+\mathcal{T} c_{pr}\right)\right)}{4 R}.
\end{align}


\section{Viscous boundary penalty}\label{sec:viscous-boundary-penalty}

This section constructs the viscous boundary penalty term for an adiabatic wall and a wall with heat entropy flux. We then show that 
the resulting expressions lead to an entropy conservative and entropy stable
penalty terms, respectively. 

Assuming a dissipative internal penalty term, $\mathcal{M}=0$, the RHS of 
\eqref{eq:time-derivative-of-entropy} becomes
\begin{equation}\label{eq:viscous-boundary-penalty-setup}
    \frac{d}{d t}\mathbf{1}^\top\widehat{\mathcal{P}}\mathbfcal{S}
    +\mathbf{DT} 
    =w^\top\widehat{\mathcal{P}}_{x_j,x_k}
    \left(g^{(b,V),q}_{x_i}\right)
    +\Theta_{x_i}^\top\left(\nu\frac{dq}{dw}\right)
    \widehat{\mathcal{P}}_{x_j,x_k}g^{(b),\Theta}_{x_i}.
\end{equation}
Define $\mathbf{W}=\text{diag}(\mathbf{w})$ and write
\begin{equation}\label{eq:1-transpose-P}
   \mathbf{w}^\top\widehat{\mathcal{P}}_{x_j,x_k}\mathbf{\chi}
   =\mathbf{1}^\top\mathbf{W}\widehat{\mathcal{P}}_{x_j,x_k}\mathbf{\chi}
   =\mathbf{1}^\top\widehat{\mathcal{P}}_{x_j,x_k}\mathbf{W}\mathbf{\chi},
\end{equation}
where $\mathbf{W}\mathcal{P}_{x_j,x_k}=\mathcal{P}_{x_j,x_k}\mathbf{W}$, since $\mathbf{W}$ is diagonal,
and $\mathbf{\chi}$ is a vector on a node.
We use \eqref{eq:1-transpose-P} for both terms in 
the RHS of \eqref{eq:viscous-boundary-penalty-setup}. The first term reads
\begin{equation}\label{eq:conservative-viscous-penalty}
    \frac12\mathbf{1}^\top\widehat{\mathcal{P}}_{x_j,x_k}
    \widehat{\mathcal{B}}^-_{x_i}\mathbf{W}
    \left(\nu\frac{d q}{d w}\right)
    \Theta_{x_i}
    -\frac12\mathbf{1}^\top\widehat{\mathcal{P}}_{x_j,x_k}
    \widehat{\mathcal{B}}^-_{x_i}\mathbf{W}
    \left(\nu\frac{d q}{d w}\left(v^{(b,V)}\right)\right)
    \widetilde{\Theta}_{x_i},
\end{equation}
where both $\frac{d q}{d w}\left(v^{(b,V)}\right)$ and 
$\widetilde{\Theta}$ are evaluated using the manufactured primitive state
\begin{equation}
    v^{(b,V)}=\left(\rho,
    -\mathcal{U}_1+2\mathcal{U}_1^\text{wall},
    -\mathcal{U}_2+2\mathcal{U}_2^\text{wall},
    -\mathcal{U}_3+2\mathcal{U}_3^\text{wall},\mathcal{T}\right)^\top.
\end{equation}
The kinematic viscosity, $\nu$, depends only on the density $\rho$.
Thus, its evaluation is the same for $v$ and $v^{(b,V)}$.
The procedure for computing $\widetilde{\Theta}_{x_i}$ 
is detailed in \cite{dalcin2019conservative} and is summarized
in the following equation. The manufactured entropy variables gradient
is defined as
\begin{equation}
    \widetilde{\Theta}_{x_i}
    =\frac{dw}{dv}\left(v^{(b,V)}\right)\tilde{\Pi}
    =\frac{dw}{dv}\left(v^{(b,V)}\right)\text{diag}(-1,1,1,1,-1)\Pi,
\end{equation}
where $\Pi$ is the gradient of primitive variables.
The second term on the RHS of \eqref{eq:viscous-boundary-penalty-setup} 
is written as
\begin{equation}\label{eq:entropy-viscous-penalty}
    \frac12\mathbf{1}^\top\widehat{\mathcal{P}}_{x_j,x_k}
    \widehat{\mathcal{B}}^-_{x_i}
    \mathbf{W}
    \left(\nu\frac{d q}{d w}\right)
    \Theta_{x_i}
    -\frac12\mathbf{1}^\top\widehat{\mathcal{P}}_{x_j,x_k}
    \widehat{\mathcal{B}}^-_{x_i}
    \mathbf{W}\left(v^{(b,V)}\right)
    \left(\nu\frac{d q}{d w}\right)
    \Theta_{x_i}.
\end{equation}
The second terms in \eqref{eq:conservative-viscous-penalty} and 
\eqref{eq:entropy-viscous-penalty} cancel out.
On the other hand, the first terms add up to
\begin{equation}
    \mathbf{1}^\top\widehat{\mathcal{P}}_{x_j,x_k}
    \widehat{\mathcal{B}}^-_{x_i}\mathbf{W}
    \left(\nu\frac{d q}{d w}\right)
    \Theta_{x_i}
    =\mathbf{1}^\top\widehat{\mathcal{P}}_{x_j,x_k}g(t).
\end{equation}
Therefore, we arrive at \eqref{eq:viscous-boundary-penalty}, \textit{i.e.},
\begin{equation}
    \frac{d}{d t}\mathbf{1}^\top\widehat{\mathcal{P}}\mathcal{S}+DT
    =\mathbf{1}^\top\widehat{\mathcal{P}}_{x_j,x_k}g(t).
\end{equation}
For an adiabatic wall $g(t)=0$, and thus, the RHS is zero yielding an entropy 
conservative term. For a wall with non-zero heat entropy flux, $g(t)$ is data, and the RHS is bounded.


\section{Interior penalty term, $\mathcal{M}^{(b,V)}$}\label{sec:dissipative-internal-penalty-term}

In this section, we compute the contribution to time derivative of
the entropy function, $\mathcal{S}$, from the interior penalty term $\mathcal{M}^{(b,V)}$. 
Assuming a uniform hexahedral element (\textit{i.e.}, a cube) of length one and a wall in the $x_i$ direction,
we use the manufactured state of the vector of primitive variables at the wall
\eqref{eq:viscous-boundary-primitive}
to construct the jump in the entropy variables in the normal direction
to the wall, $\hat{n}$,
\begin{equation}
    \Delta w^{(b,V)}=(w-w^{(b,V)})\cdot \hat{n}^\top, 
\end{equation}
and the viscous boundary flux state
\begin{align}
    f^{(V)}(v^{(b,V)})
    =\left(\nu\frac{d q}{d w}(v^{(b,V)})\right)
    \Delta w^{(b,V)}\cdot \hat{n}.
\end{align}
Then, we use an average of the viscous flux of the numerical solution 
state at the wall and the manufactured state
\begin{equation}
    \mathcal{M}^{(b,V)}=-\beta\frac{f^{(V)}(v)+f^{(V)}(v^{(b,V)})}{2},
\end{equation}
to compute the dissipative term with Mathematica \cite{Mathematica}
\begin{equation}
    w^\top\mathcal{M}^{(b,V)}
    =-\frac{2\beta \alpha \mu}{R\mathcal{T}^2}\|\Delta \mathcal{U}\|^2
    \left(\|\Delta \mathcal{U}\|^2+R\mathcal{T}\right),
\end{equation}
where $\Delta \mathcal{U}$ is the jump in the velocity of the numerical solution 
state at the wall and the manufactured state. The term is negative for any non-zero jump in velocity, 
and thus, is entropy dissipative.



\end{document}